\theoremstyle{plain}
\newtheorem{theorem}{Theorem}
\newtheorem{lemma}{Lemma}
\newtheorem*{theo*}{Theorem}
\newtheorem{proposition}{Proposition}
\newtheorem{corollary}{Corollary}
\theoremstyle{definition}
\newtheorem*{definition*}{Definition}
\newtheorem{example}{Example}
\newtheorem{remark}{Remark}
\def\ad{\mathop{\rm ad}}
\def\Der{{\rm Der}}
\def\Dim{{\rm dim}}
\begin{document}
\sloppy
\title[On   nilpotent and solvable Lie algebras of  derivations ]
{On   nilpotent and solvable Lie algebras \\ of   derivations }
\author
{ Ie. O. Makedonskyi and  A.P.  Petravchuk}
\address{Ievgen  Makedonskyi:
Department of Algebra and Mathematical Logic, Faculty of Mechanics and Mathematics, Kyiv
Taras Shevchenko University, 64, Volodymyrska street, 01033  Kyiv, Ukraine}
\email{makedonskyi@univ.kiev.ua , makedonskyi.e@gmail.com}
\address{Anatoliy P. Petravchuk:
Department of Algebra and Mathematical Logic, Faculty of Mechanics and Mathematics, Kyiv
Taras Shevchenko University, 64, Volodymyrska street, 01033  Kyiv, Ukraine}
\email{aptr@univ.kiev.ua , apetrav@gmail.com}
\date{\today}
\keywords{Lie algebra, vector field,  solvable algebra, derivation, commutative ring}
\subjclass[2000]{Primary 17B66; Secondary 17B05, 13N15}

%
\begin{abstract}
Let $\mathbb K$ be a field and $A$ be a commutative associative $\mathbb K$-algebra which is an integral domain. The Lie algebra $\Der A$ of all $\mathbb K$-derivations of $A$ is an $A$-module in a natural way and if $R$ is the quotient field of $A$ then $R\Der A$ is a vector space over $R.$  It is proved that if  $L$ is a nilpotent  subalgebra of $R\Der A$ of  rank $k$ over $R$ (i.e. such that $\dim _{R}RL=k $), then  the  derived length of $L$ is at most $k$ and $L$ is finite dimensional over its field of constants. In case of  solvable  Lie algebras over a field of characteristic zero their  derived length does not  exceed $2k.$ Nilpotent and solvable Lie algebras of rank $1$ and $2$ (over $R$) from the Lie algebra $R\Der A$ are characterized. As a consequence we obtain  the same estimations for nilpotent and solvable Lie algebras of vector fields with polynomial, rational, or formal coefficients.
\end{abstract}
\maketitle


\section*{Introduction}
Let $\mathbb{K}$ be a  field and $A$ be an associative commutative $\mathbb K$-algebra with identity, without zero  divisors, i.e. an integral domain. The set  $\Der A$  of all $\mathbb K$-derivations of $A,$  i.e. $\mathbb K$-linear operators $D$ on $A$ satisfying the Leibniz's rule: $D(ab)=D(a)b+aD(b)$ for all $a, b\in A$ is a Lie algebra over $\mathbb K$  and an $A$-module in a natural way:  given $a\in A, D\in \Der A,$   the derivation $aD$ sends any element $x\in A$ to $a\cdot D(x).$  The structure of the Lie algebra $\Der A$ is of great interest because,  in geometric terms,   derivations can  be considered as  vector fields on geometric objects. For example, in case $\mathbb K=\mathbb C$ and $A=\mathbb C[x_{1}, \ldots , x_{n}],$ the polynomial ring, any $D\in \Der A$ is of the form $$D=f_{1}\frac{\partial}{\partial x_{1}}+\cdots +f_{n}\frac{\partial}{\partial x_{n}}, \ f_{i}\in \mathbb C[x_{1}, \ldots , x_{n}],$$
i.e. $D$ is a vector field on $\mathbb C^{n}$ with polynomial coefficients.
Lie algebras  of vector fields with polynomial, formal power series, or analytical coefficients were  studied intensively
by many authors (see, for example, \cite{Lie}, \cite{Bavula}, \cite{Draisma}, \cite{Olver},   \cite{Olver1},  \cite{PBNL}, \cite{Post}).

In general case, when $A$ is an integral domain,  subalgebras $L$ of $\Der A$ such that $L$ are submodules of the $A$-module $\Der A$  were studied in \cite{J1}, and  sufficient conditions were given for $L$ to be simple. In this paper, we study subalgebras of the Lie algebra $\Der A$ at the other extreme: nilpotent and solvable, under the condition that they are of finite rank over $A.$
Recall that if $R$ is the quotient field  of $A,$  then the rank ${\rm rk}_{R}L$ is defined as ${\rm rk}_{R}L=\dim _{R}RL.$
Any subalgebra $L$ of the Lie algebra $\Der A$ determines uniquely the field $F=F(L)$ of constants consisting of all $r\in R$ such that $D(r)=0$ for all $D\in L.$ The vector space  $FL$ over the field $F$ is actually  a Lie algebra over $F$ (note that $RL$ being a Lie algebra over $\mathbb K$ is not in general a Lie algebra over $R$).  The main results of the paper:
if $L$ is a nilpotent subalgebra of the Lie algebra $R\Der A$ with ${\rm rk}_{R}L=k,$ then the derived length of $L$ is at most $k$ and the Lie algebra $FL$ is finite dimensional over $F$ (Theorem \ref{nilpot}).  In case when $L$ is solvable of rank $k$ and ${\rm char}  \  \mathbb K=0,$ the derived length does not exceed $2k$ (Theorem \ref{derlensol}). If $\dim _{\mathbb K}L<\infty ,$ then the last estimation can be improved to $k+1.$
If we apply these  results to the important case $\mathbb K=\mathbb C$  and $A=\mathbb C[[x_{1}, \ldots , x_{n}]],$  the ring of formal power series, we
obtain that nilpotent subalgebras of the Lie algebra $\Der A$ of rank $k$ over $R$ have derived length $\leq k$ and solvable subalgebras have derived length $\leq 2k.$  Note that in this particular case it was proved in \cite{Mart1}  that all nilpotent subalgebras have derived length at most $n$ and solvable at most $2n$ (see Corollary \ref{Martelo}).

We also give a rough characterization   of nilpotent and solvable subalgebras of rank $1$ and $2$ over $R$ from the Lie algebra $R\Der A$ (over their  fields of constants).
Such a characterization   can be applied to study finite dimensional Lie algebras of smooth vector fields in three variables (the case of one and two variables was studied in \cite{Lie}, \cite{Olver}, and \cite{Olver1}). Using the same approach we gave in \cite{MP12} a description of finite dimensional subalgebras of $W(A)$ in case $A=\mathbb K(x, y),$ the field of rational functions.

We use standard  notations, the ground field $\mathbb{K}$ is  arbitrary unless  otherwise  stated. The quotient field  of the integral domain $A$ under consideration will be denoted by $R.$ Any derivation $D$ of $A$ can be uniquely extended  to a derivation of $R$ by the rule: $D(a/b)=(D(a)b-aD(b))/b^{2}.$  It is obvious that $R\Der A$ is a subalgebra of the Lie algebra $\Der R$ and $\Der A$ is embedded in a natural way into $R\Der A.$  We will  denote $R\Der A$ by $W(A),$ it is a vector space over $R$ of dimension ${\rm rk}_{R}\Der A,$  and  a Lie algebra over $\mathbb K$ but not in general case over $R.$
All subspaces and subalgebras of $W(A)$ will be  considered over the field $\mathbb K$ unless otherwise stated.
If $L$ is a subalgebra of the Lie algebra $W(A),$ then the field
$F=\{r\in R \ | \ D(r)=0 \ \mbox{for all} \ D\in L\}$  will be called the {\it field of constants} of  $L$. We denote by $s(L)$ the derived length of a (solvable) Lie algebra $L.$  If a Lie algebra $L$ contains an ideal $N$ and a subalgebra $B$ such that $L=N+B, N\cap B=0,$  then we write $L=B\rightthreetimes N$ for the semidirect sum of $B$ and $N.$

\section{Nilpotent  subalgebras of finite rank of the Lie algebra $W(A)$}
We will use the next statement which can be immediately checked.
\begin{lemma}\label{properties}
Let $D_{1}, D_{2}\in W(A)$  and $a, b\in R.$ Then it holds:

{\rm 1.}  $[aD_{1}, bD_{2}]=ab[ D_1, D_2]+aD_1(b)D_2-bD_2(a)D_{1}.$

{\rm 2.} If $a, b\in \ker D_{1}\cap \ker D_{2},$  then  $[aD_{1}, bD_{2}]=ab[ D_1, D_2].$
\end{lemma}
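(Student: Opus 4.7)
The plan is to verify both identities by a direct computation: apply the left-hand side of formula~1 to an arbitrary element $x\in R$, regarding the Lie bracket as the commutator of $\mathbb{K}$-linear operators on $R$, and expand using Leibniz's rule. Concretely, I would write
\[
[aD_1,bD_2](x)=aD_1\!\bigl(bD_2(x)\bigr)-bD_2\!\bigl(aD_1(x)\bigr),
\]
apply Leibniz to each of the two inner expressions, and collect the four resulting terms: the pair $abD_1D_2(x)-abD_2D_1(x)$ combines into $ab[D_1,D_2](x)$, while the remaining two terms $aD_1(b)D_2(x)-bD_2(a)D_1(x)$ are exactly what appears on the right of formula~1. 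Since this equality holds for every $x\in R$, formula~1 is an identity in $W(A)\subseteq \Der R$.

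Formula~2 then requires no further work: if $D_1(b)=0$ and $D_2(a)=0$, the correction terms $aD_1(b)D_2$ and $bD_2(a)D_1$ vanish, leaving $[aD_1,bD_2]=ab[D_1,D_2]$.

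The only point needing a brief comment, rather than being a real obstacle, is that $a$ and $b$ are taken in $R$, not merely in $A$. This is handled by the unique extension of any $D\in\Der A$ to a derivation of the quotient field $R$ via $D(a/b)=(D(a)b-aD(b))/b^2$, noted in the introduction; under this extension Leibniz's rule continues to hold on $R$, so the computation above is valid verbatim for $a,b\in R$. Thus both parts of the lemma follow from a single line of operator calculus.
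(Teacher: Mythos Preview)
Your proposal is correct and is exactly the ``immediate check'' the paper alludes to: the paper gives no explicit proof of this lemma, merely stating that it ``can be immediately checked,'' and your direct operator computation using Leibniz's rule on an arbitrary $x\in R$ is the intended verification. There is nothing to add.
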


Let $L$ be a nonzero  subalgebra of rank $k$ over $R$ of the Lie algebra $W(A)$
and let $\lbrace D_1, \dots , D_k \rbrace$ be a basis of $L$ over $R$. Recall that the set
$RL$  of $W(A)$  consists of all linear combinations
of elements $aD$, where $a \in R$, $D \in L;$ analogously one can define the set $FL$ (F=F(L) is the field of constants of $L$).

\begin{lemma}
Let $L$ be a nonzero subalgebra of  $W(A)$ and let $FL$, $RL$ be  $\mathbb{K}$-spaces  defined as above.
Then:

{\rm 1.}  $FL$ and $RL$ are $\mathbb{K}$-subalgebras of the Lie algebra $W(A)$.
Moreover,  $FL$ is a Lie algebra over the field $F.$

{\rm 2.} If the algebra $L$ is  abelian,  nilpotent, or  solvable  then the Lie algebra $FL$ has the same property, respectively.
\end{lemma}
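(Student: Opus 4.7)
The plan is to prove part 1 using Lemma \ref{properties} together with the observation that every element of $L$ annihilates the field $F$ of constants, and then to bootstrap part 2 by a short induction on the derived and lower central series. The argument is essentially a direct manipulation of the two bracket formulas in Lemma \ref{properties}; there is no conceptual obstacle, only bookkeeping.

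For closure of $FL$ under the bracket, I would take typical generators $aD_1, bD_2 \in FL$ with $a, b \in F$ and $D_1, D_2 \in L$. Because $a, b$ are constants for every derivation in $L$, they lie in $\ker D_1 \cap \ker D_2$, so part 2 of Lemma \ref{properties} gives $[aD_1, bD_2] = ab[D_1, D_2]$, which belongs to $FL$ since $ab \in F$ and $[D_1, D_2] \in L$. Extending bilinearly over $\mathbb K$ yields that $FL$ is a $\mathbb K$-subalgebra. For the $F$-structure, I would observe that every element of $FL$ is an $F$-linear combination of derivations from $L$ and therefore also annihilates $F$; consequently, for $\lambda \in F$ and $x, y \in FL$, Leibniz gives $[\lambda x, y] = \lambda [x, y] - y(\lambda) x = \lambda [x, y]$, so the bracket is $F$-bilinear. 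For $RL$, part 1 of Lemma \ref{properties} yields $[aD_1, bD_2] = ab[D_1, D_2] + aD_1(b) D_2 - bD_2(a) D_1$, and all three coefficients $ab$, $aD_1(b)$, $bD_2(a)$ lie in $R$ while $[D_1, D_2], D_1, D_2 \in L$, so the bracket belongs to $RL$.

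For part 2, the key lemma to isolate is that for any $\mathbb K$-subspaces $V, W \subseteq L$ one has $[FV, FW] \subseteq F \cdot [V, W]$: since elements of $L$ kill $F$, the identity $[fv, gw] = fg[v, w]$ holds for all $f, g \in F$, $v \in V$, $w \in W$ by Lemma \ref{properties}, part 2. A straightforward induction then gives $\gamma_i(FL) \subseteq F \gamma_i(L)$ and $(FL)^{(i)} \subseteq F L^{(i)}$ for every $i$, so nilpotency and solvability transfer from $L$ to $FL$ with no increase in nilpotency class or derived length. The abelian case is just the first step: $[aD_1, bD_2] = ab[D_1, D_2] = 0$ when $L$ is abelian.

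The only subtle point worth flagging, and the reason everything works, is that all subspaces appearing in the derived and lower central series of $FL$ actually consist of derivations that annihilate $F$; this is precisely what kills the potentially problematic terms $aD_1(b) D_2$ and $bD_2(a) D_1$ in the bracket formula and forces the inductive step to go through cleanly.
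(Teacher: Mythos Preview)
Your proof is correct and is precisely the direct verification the paper has in mind; the paper's own proof consists of the single phrase ``Immediate check,'' and you have simply spelled out that check using Lemma~\ref{properties}.
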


\begin{proof}
Immediate check.
\end{proof}

\begin{lemma}
Let $L$ be a subalgebra of finite rank over $R$  of the Lie algebra $W(A)$, $Z=Z(L)$ be the center of $L$
and $F$ be the field of constants of $L$. Then ${\rm rk}_{R} Z = \dim _{F}FZ$
and $FZ$ is a subalgebra of the center $Z(FL)$. In particular, if
$L$ is abelian, then $FL$ is an abelian subalgebra  of  $W(A)$ and ${\rm rk}_{R}L=\dim _{F}FL.$

\end{lemma}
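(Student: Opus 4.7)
The plan is to prove the two main assertions separately and then derive the ``in particular'' clause by specializing to $L = Z(L)$.

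For the inclusion $FZ \subseteq Z(FL)$, I would pick arbitrary generators $aD \in FZ$ (with $a \in F$, $D \in Z$) and $bD' \in FL$ (with $b \in F$, $D' \in L$) and apply Lemma \ref{properties}(1) directly:
\[
[aD, bD'] = ab[D,D'] + aD(b)D' - bD'(a)D.
\]
Since $a,b$ lie in the field of constants $F$ and $D,D' \in L$, both $D(b)$ and $D'(a)$ vanish; and $[D,D']=0$ because $D$ is central in $L$. So $[aD,bD']=0$, and extending $F$-linearly gives the claim. The same computation with $D,D' \in Z$ shows $FZ$ is closed under the bracket, hence a subalgebra.

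For the equality ${\rm rk}_R Z = \dim_F FZ$, I would first note $RZ = R(FZ)$ (since $F \subseteq R$ and $Z \subseteq FZ$), so any $F$-spanning set of $FZ$ is an $R$-spanning set of $RZ$. The substantive step is to show that any finite $F$-linearly independent family $D_1,\ldots,D_m \in FZ$ is also $R$-linearly independent. Suppose for contradiction that there is a nontrivial relation $\sum_{i=1}^m r_i D_i = 0$ with $r_i \in R$, chosen of minimal length; after scaling we may take $r_1 = 1$. For any $D \in L$, apply $\operatorname{ad} D$ to the relation. Using Lemma \ref{properties}(1) and the facts $[D,D_i]=0$ (because $D_i \in Z$) and $D_i(r_j)$ terms vanish after expansion in the right way, the bracket reduces to
\[
0 = [D, \sum_i r_i D_i] = \sum_i D(r_i)\, D_i.
\]
The $i=1$ term is $D(1)D_1 = 0$, so this is a strictly shorter relation; by minimality each $D(r_i)=0$ for every $D \in L$, forcing $r_i \in F$. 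But then $\sum r_i D_i = 0$ with $r_i \in F$ contradicts the $F$-independence of the $D_i$. Hence $R$-independence follows, and taking $m = \dim_F FZ$ (which is finite since $L$, and thus $Z$, has finite $R$-rank) gives the dimension equality.

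The ``in particular'' case is immediate: when $L$ is abelian, $Z = L$, so $FL = FZ \subseteq Z(FL)$, meaning $FL$ is abelian, and ${\rm rk}_R L = \dim_F FL$ by the first part.

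The only delicate step is the reduction via $\operatorname{ad} D$ in the rank argument; the rest is a direct unpacking of definitions. The key insight making the whole argument work is that centrality of the $D_i$ kills the $[D,D_i]$ terms so that the bracket of the relation isolates the ``logarithmic derivative'' data $D(r_i)$, which is precisely what is needed to force $r_i \in F$.
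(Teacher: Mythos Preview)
Your argument is correct and uses the same core computation as the paper: bracketing with $S\in L$ turns a relation $\sum a_iD_i$ (with central $D_i$) into $\sum S(a_i)D_i$, which forces the $a_i$ into $F$. The paper packages the rank equality a bit more directly---it starts from an $R$-basis $\{D_1,\ldots,D_k\}$ of $Z$ chosen inside $Z$ and shows that the coefficients of any $D\in Z$ in that basis already lie in $F$, so no minimal-counterexample step is needed; one small slip in your write-up is that your $D_i$ lie in $FZ$, not $Z$, so the vanishing of $[D,D_i]$ should be justified by the inclusion $FZ\subseteq Z(FL)$ you established first rather than by ``$D_i\in Z$.''
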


\begin{proof}
Let $\lbrace D_1, \dots , D_k \rbrace$ be  a basis of $Z$ over $R$.
Take any element $D \in Z$ and write $D=a_1D_1+ \dots +a_kD_k,$ where $a_{i}\in R.$.
Then  for any element $S\in L$ we have :
\[0=[S,D]=[S,a_1D_1+ \dots +a_kD_k]=S(a_1)D_1+ \dots +S(a_k)D_k.\]
Since the elements $D_1, \dots , D_k$ are linearly independent over $R$ it follows
from the last relation that $S(a_i)=0, i=1,  \dots , k$. Hence $a_i \in F, i=1,  \dots , k$
and $\lbrace D_1, \dots , D_k \rbrace$ is a basis of $FL$ over $F$.
The latter means that ${\rm rk}_{R}Z= \dim_{F}FZ$.
\end{proof}

\begin{lemma} \label{intersectionideal}
Let $L$ be a  subalgebra of  the Lie algebra $W(A)$ and  $I$ be an ideal of $L$. Then
the vector  space $RI\cap L$ (over $\mathbb K$)  is also  an ideal  of $L.$
\end{lemma}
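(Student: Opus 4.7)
The plan is to verify the two conditions an ideal of a Lie algebra must satisfy: $RI\cap L$ is a $\mathbb K$-subspace of $L$, and it is closed under bracketing with arbitrary elements of $L$. The first is immediate because both $RI$ and $L$ are $\mathbb K$-subspaces of $W(A)$, so their intersection is a $\mathbb K$-subspace; and by construction it sits inside $L$.

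The substantive part is the bracket closure. I would pick an arbitrary $X\in RI\cap L$ and an arbitrary $S\in L$, and write $X=\sum_{j=1}^{m} r_j D_j$ with $r_j\in R$ and $D_j\in I$, using the definition of $RI$. Applying Lemma \ref{properties}(1) term by term (with $a=1,\ b=r_j$) gives
\[
[S, r_j D_j] \;=\; r_j[S, D_j] + S(r_j)\,D_j,
\]
so that
\[
[S,X] \;=\; \sum_{j=1}^{m} r_j[S, D_j] \;+\; \sum_{j=1}^{m} S(r_j)\,D_j.
\]
Since $I$ is an ideal of $L$, each $[S,D_j]$ lies in $I$; and each coefficient $r_j$, respectively $S(r_j)$, lies in $R$. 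Therefore $[S,X]\in RI$. On the other hand $[S,X]\in L$ because $S, X\in L$ and $L$ is a subalgebra. Combining, $[S,X]\in RI\cap L$, which is what we need.

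There is really no obstacle here; the only thing to watch is to notice that one cannot just treat $RI$ as an $R$-submodule and use $R$-linearity of the bracket, because $W(A)$ is not an $R$-Lie algebra. The Leibniz term $S(r_j)D_j$ is precisely the extra piece that would be absent in an $R$-linear setting, and the key observation is that it still lies in $RI$ because $D_j\in I$ (and hence in $RI$) with scalar $S(r_j)\in R$.
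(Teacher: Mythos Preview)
Your proof is correct and follows essentially the same approach as the paper: write an element of $RI\cap L$ as a finite $R$-linear combination of elements of $I$, expand the bracket using the Leibniz rule, and observe that each resulting term lies in $RI$ while the whole bracket lies in $L$. The paper's version is slightly more terse (it omits the explicit subspace check and the concluding remark), but the argument is identical.
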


\begin{proof}
Take  any element $\sum_{k=1}^m r_k i_k\in RI\cap L$ with  $r_k \in R$, $i_{k}\in I$, $k=1, \dots , m$.
Then for an arbitrary element $D \in L$  we obtain:
\[[D,\sum_{k=1}^m r_k i_k]=\sum_{k=1}^m (D(r_{k})i_{k}+r_{k}[D, i_{k}])\in RI\cap L.\]
This completes the proof of Lemma.
\end{proof}

\begin{lemma} \label{nilbasis}
Let $L$ be a nilpotent   subalgebra of rank $k>0$ over $R$  of  the Lie algebra $W(A).$
Then:

{\rm 1.}  $L$ contains a series of ideals
\begin{equation}\label{chain}
0=I_0 \subset I_1 \subset I_2 \subset \dots \subset I_k=L
\end{equation}
such that ${\rm rk}_{R} I_s=s$, $s=0,  \dots , k.$

{\rm 2.}  $L$ possesses an $R$-basis $\{ D_1,  \dots , D_k\}$ such that
$I_s=L \cap (RD_1 +\cdots +RD_s)$, $s=1, \dots ,  k$ and $[L, D_s]\subset I_{s-1}.$

{\rm 3. } $\dim _{F} FL/FI_{k-1}=1.$

{\rm 4. } $[I_j, I_j]\subset I_{j-1}$, $j=1, \ldots , k.$

\end{lemma}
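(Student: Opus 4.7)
The plan is to establish the four parts in the order 1--2, 4, 3, since the calculation used for part 3 is a special case of that used for part 4.

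For parts 1 and 2 I proceed by iteratively building the chain. Set $I_0 = 0$ and suppose that ideals $I_1 \subset \cdots \subset I_s$ of $R$-ranks $1, \ldots, s$ have been constructed, together with an $R$-basis $D_1, \ldots, D_s$ of $I_s$ satisfying $[L, D_t] \subset I_{t-1}$ and $I_t = L \cap (RD_1 + \cdots + RD_t)$ for $t \leq s$. If $s < k$, then $L/I_s$ is a nonzero nilpotent Lie algebra, hence has nonzero centre, and I pick $D_{s+1} \in L \setminus I_s$ whose image is a nonzero central element of $L/I_s$; then $[L, D_{s+1}] \subset I_s$. Setting $I_{s+1} := L \cap (RD_1 + \cdots + RD_{s+1})$, the identity $[D, \sum r_t D_t] = \sum (D(r_t) D_t + r_t [D, D_t])$ from Lemma~\ref{properties}, combined with $[D, D_t] \in I_{t-1} \subset RD_1 + \cdots + RD_{t-1}$, shows that $I_{s+1}$ is closed under bracket with elements of $L$ and therefore an ideal. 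The $R$-linear independence of $D_1, \ldots, D_{s+1}$ follows from $D_{s+1} \notin I_s$, so ${\rm rk}_R I_{s+1} = s+1$. At $s = k$, every element of $L$ is an $R$-combination of $D_1, \ldots, D_k$ by the basis property, so $I_k = L$.

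For part 4 I compute the bracket of two arbitrary elements of $I_j$. Writing $D = \sum_{s \leq j} q_s D_s$ and $E = \sum_{s \leq j} r_s D_s$ and expanding by Lemma~\ref{properties}, the $D_j$-coefficient of $[D, E]$ reduces to $D(r_j) - E(q_j)$: the pairwise brackets $[D_s, D_t]$ for $s, t \leq j$ lie in $I_{\max(s,t)-1}$ and contribute nothing to the $D_j$-slot. The heart of the argument is the sub-lemma that for any $E = \sum r_s D_s \in I_j$ one has $D_t(r_j) = 0$ for every $t \leq j$. This is obtained by computing $[D_t, E]$ in two ways: on the one hand $[L, D_t] \subset I_{t-1} \subset RD_1 + \cdots + RD_{t-1}$ shows $[D_t, E]$ has no $D_j$-component (since $t \leq j$); on the other hand, the direct expansion yields $D_t(r_j)$ for that component, forcing $D_t(r_j) = 0$. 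Expanding $D = \sum q_s D_s$ then gives $D(r_j) = \sum q_s D_s(r_j) = 0$, and symmetrically $E(q_j) = 0$. Hence the $D_j$-coefficient of $[D, E]$ vanishes, and combined with $[D, E] \in L$ this yields $[D, E] \in L \cap (RD_1 + \cdots + RD_{j-1}) = I_{j-1}$.

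Part 3 is then obtained by applying the sub-lemma of part 4 at $j = k$: for every $E \in L$ the $D_k$-coefficient $r_k(E)$ is killed by all of $L$, hence lies in $F$. The map $L \to F$, $E \mapsto r_k(E)$, is $\mathbb{K}$-linear, vanishes precisely on $I_{k-1}$, and extends $F$-linearly to a surjection $\phi : FL \to F$ with $\phi(D_k) = 1$ and $FI_{k-1} \subset \ker \phi$. The main obstacle will be to verify the reverse inclusion $\ker \phi \subset FI_{k-1}$, equivalently to show that $E - \phi(E) D_k \in FI_{k-1}$ for every $E \in L$; once this is established, the class of $D_k$ generates $FL / FI_{k-1}$ over $F$, and the quotient has dimension exactly one.
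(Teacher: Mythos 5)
Your parts 1--2 follow the paper's construction exactly (lift a central element of $L/I_s$, set $I_{s+1}=L\cap(RD_1+\cdots+RD_{s+1})$, check the ideal property via Lemma~\ref{properties}). For part 4 you take a genuinely different and, I think, cleaner route: the paper splits off the terms $[a_jD_j,b_jD_j]$ and kills them by applying part 3 to the subalgebra $I_j$ with its own field of constants $F_j$, whereas you show directly that the $D_j$-coefficient of $[D,E]$ equals $D(r_j)-E(q_j)$ and that each $D_t(r_j)$ vanishes by computing $[D_t,E]\in I_{t-1}$ in two ways. That sub-lemma is correct and avoids the detour through the fields $F_j$.

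Part 3, however, is left with a genuine gap, and you say so yourself: you never prove $\ker\phi\subset FI_{k-1}$, i.e.\ that $E-\phi(E)D_k\in FI_{k-1}$ for $E\in L$. This is not a formality. The element $E-\phi(E)D_k$ lies in $FL\cap(RD_1+\cdots+RD_{k-1})$, but since $L$ is only a $\mathbb K$-subspace there is no reason for it to be an $F$-combination of elements of $I_{k-1}=L\cap(RD_1+\cdots+RD_{k-1})$. Indeed the inclusion can fail: for $A=\mathbb K[x,y,z]$ take the Heisenberg algebra $L=\mathbb K\langle \frac{\partial}{\partial x},\,\frac{\partial}{\partial y},\, y\frac{\partial}{\partial x}+z\frac{\partial}{\partial y}\rangle$, so $k=2$, $F=\mathbb K(z)$, $D_1=\frac{\partial}{\partial x}$, $I_1=\mathbb K\frac{\partial}{\partial x}$ and $FI_1=\mathbb K(z)\frac{\partial}{\partial x}$; then $E=y\frac{\partial}{\partial x}+z\frac{\partial}{\partial y}$ gives $E-\phi(E)D_2=y\frac{\partial}{\partial x}\notin FI_1$, and in fact $\dim_F FL/FI_1=2$. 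So the step you flag as ``the main obstacle'' is not merely unproved, it is false for a general $L$; the statement and your $\phi$-argument only go through after replacing $L$ by $FL$ (harmless: same rank, same field of constants, still nilpotent by Lemma~2), which makes each $I_s$ an $F$-subspace and gives $\ker\phi=FL\cap(RD_1+\cdots+RD_{k-1})=I_{k-1}=FI_{k-1}$ at once. The paper's own proof jumps from ``$a_k\in F$'' to ``$\dim_F FL/FI_{k-1}=1$'' without addressing this, so you have located a real soft spot; but as it stands your proof of part 3 is incomplete until the reduction to $L=FL$ is made explicit.
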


\begin{proof}
1-2. Take a nonzero element $D_1 \in Z(L)$ and  put $I_1 = RD_1 \cap L$. Then $I_1$ is an ideal of $L$
by Lemma \ref{intersectionideal}. Assume that we have built the set of
elements $D_1, \dots , D_j$ such that the $\mathbb K$-spaces $I_s=L \cap (RD_1 +\dots + RD_s)$, $s=1, \dots , j$
are ideals of the Lie algebra $L$ and  $[L, D_s]\subset I_{s-1}$ for  $s=1, \dots , j$ with  ${\rm rk}_{R}{I_s}=s$.
Take a  one-dimensional ideal $\langle D_{j+1}\rangle +I_{j}$ of the (nilpotent) quotient algebra $L/I_{j}.$
Then  $[L, D_{j+1}] \subset I_j$ and the elements $D_{1}, \ldots , D_{j+1}$ are linearly independent  over $R.$
Put $I_{j+1}=L \cap (RD_1 +\dots +RD_{j+1})$. Then $I_{j+1}$ is an ideal of $L$ by Lemma \ref{intersectionideal} and
 ${\rm rk}_{R} I_{j+1}=j+1.$ Therefore we obtain  by induction  the chain (\ref{chain}) of ideals  and
  a basis $\{ D_{1}, \ldots , D_{k}\}$ of $L.$ This basis satisfies obviously the condition 2 of Lemma.

3. Take  an arbitrary element $D=a_1D_1+ \dots +a_kD_k \in L$ and any element $D_{i}$ from the basis $\{ D_{1}, \ldots , D_{k}\}.$  Then using Lemma \ref{properties} we get:
\[[D_i, \sum_{j=1}^{k}a_{j}D_{j}]=\sum_{j=1}^{k}D_{i}(a_{j})D_{j}+\sum_{j=1}^{k}a_{j}[D_{i}, D_{j}]\]
Since $[D_{i}, I_{s}]\subseteq I_{s-1}$ we see from the last relation that
 $D_i(a_k)=0, i=1, \ldots ,k$. The latter means  $a_k \in F$ and therefore $\dim _{F}FL/FI_{k-1}=1.$
 The part 3 of Lemma  is proved.

4. Let $S=\sum _{i=1}^{j}a_{i}D_{i}$ and $T=\sum _{i=1}^{j}b_{i}D_{i}$ be any elements of $I_{j}.$
Then:
 \begin{equation}\label{Dj1}
 [S, T]= [\sum _{i=1}^{j}a_{i}D_{i}, \sum _{i=1}^{j}b_{i}D_{i}]=\end{equation}
  \[  =[\sum _{i=1}^{j-1}a_{i}D_{i}, \sum _{i=1}^{j-1}b_{i}D_{i}]+[\sum _{i=1}^{j-1}a_{i}D_{i}, b_{j}D_{j}]+
+ [a_{j}D_{j}, \sum _{i=1}^{j-1}b_{i}D_{i}]+[a_{j}D_{j}, b_{j}D_{j}]. \]
The first three summands in the right side of the last equality lie  in $RD_{1}+\ldots +RD_{j-1}$ by the part 2 of this Lemma.
Let us show that $[a_{j}D_{j}, b_{j}D_{j}]=0.$ Really, applying the part 3 of this Lemma to the Lie algebra $I_{j}$ (instead of $L$) we see that $\dim _{F_{j}}F_{j}I_{j}/F_{j}I_{j-1}=1,$ where $F_{j}$ is the field of constants of $I_{j}.$
 Therefore the elements  $a_{j}$ and $b_{j}$  are linearly dependent over $F_{j},$  say, $\alpha a_{j}+\beta b_{j}=0$  for some $\alpha , \beta \in F_{j}.$ If $\alpha \not= 0,$ then $a_{j}=-\alpha ^{-1}\beta b_{j}$ and  $[a_{j}D_{j}, b_{j}D_{j}]=[-\alpha ^{-1}\beta b_{j}D_{j}, b_{j}D_{j}]=0$ because $\alpha ^{-1}\beta \in F_{j}\subseteq ker D_{j}.$  Analogously
$[a_{j}D_{j}, b_{j}D_{j}]=0$ provided that $\beta \not= 0.$ Thus, the right side of the equality (\ref{Dj1}) lies in $I_{j-1}.$ The proof of part 4  is completed.
\end{proof}

\begin{corollary}\label{derlennil}
Let $L$ be a  nilpotent subalgebra of rank $k$ over $R$  of  the Lie algebra $W(A).$
Then the derived length of  $L$ is at most $k.$
\end{corollary}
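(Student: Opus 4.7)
The plan is to deduce the corollary directly from part 4 of Lemma \ref{nilbasis} by a short induction on the derived series of $L$.

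First I would set up notation: write $L^{(0)}=L$ and $L^{(j+1)}=[L^{(j)},L^{(j)}]$ for the derived series. The goal is then to show $L^{(k)}=0$, which is exactly the statement that the derived length of $L$ is at most $k$.

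The key step is to prove by induction on $j$ that $L^{(j)}\subseteq I_{k-j}$, where $\{I_s\}$ is the chain of ideals from Lemma \ref{nilbasis}. The base case $j=0$ is immediate since $L^{(0)}=L=I_k$. For the inductive step, assume $L^{(j)}\subseteq I_{k-j}$; then
\[
L^{(j+1)}=[L^{(j)},L^{(j)}]\subseteq [I_{k-j},I_{k-j}]\subseteq I_{k-j-1},
\]
where the last inclusion is exactly part 4 of Lemma \ref{nilbasis}. Setting $j=k$ gives $L^{(k)}\subseteq I_0=0$, so $L$ is solvable of derived length at most $k$.

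There is no real obstacle here, since all the substantive content has been absorbed into Lemma \ref{nilbasis}(4); the corollary is essentially a telescoping of that inclusion along the chain (\ref{chain}). The only thing to double-check is that the chain reaches $L$ in exactly $k$ steps, which is guaranteed by ${\rm rk}_R I_s=s$ with $I_k=L$.
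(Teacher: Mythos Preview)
Your proof is correct and is exactly the argument the paper intends: the paper's own proof is the single line ``See part 4 of Lemma \ref{nilbasis},'' and your induction $L^{(j)}\subseteq I_{k-j}$ is precisely the telescoping of that inclusion which the paper leaves implicit.
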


\begin{proof}
See part 4 of  Lemma \ref{nilbasis}.
\end{proof}
\begin{remark}\label{subspaces}
We will  use the next almost obvious statement:  If $V$ is a vector space over the field $\mathbb K$ and $U, W$ are subspaces of $V$ of finite codimension,  then the subspace $U\cap W$ is also of finite codimension in $V.$
\end{remark}

\begin{lemma} \label{nillinalg}
Let $V$ be a vector space over the  field $\mathbb{K}$ and $L$ be a finite dimensional
$\mathbb{K}$-subspace of ${\rm End}(V).$  Suppose that $L$ acts nilpotently on $V$ (i. e. $L^n(V)=0$ for some $n \geq 1$).
If the vector space  $V_{0}=\{ v\in V \ |  \ Lv=0\}$ is finite dimensional over $\mathbb{K},$ then $\dim V<\infty$.
\end{lemma}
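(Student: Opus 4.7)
The plan is to set up an ascending filtration on $V$ by the kernels of successive powers of $L$, then bound the dimensions of the successive layers in terms of $\dim L$ and $\dim V_{0}$.

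Concretely, I would define $W_{i}=\{v\in V : L^{i}v=0\}$ for $i\ge 0$, so that $W_{0}=0$, $W_{1}=V_{0}$, and $W_{n}=V$ by the nilpotency hypothesis $L^{n}(V)=0$. These form an increasing chain $0=W_{0}\subseteq W_{1}\subseteq\cdots\subseteq W_{n}=V$, and for each $\ell\in L$ we have $\ell(W_{i})\subseteq W_{i-1}$. The key observation is then that for every $i\ge 1$ the evaluation map
\[
\Phi_{i}\colon W_{i}\longrightarrow \operatorname{Hom}_{\mathbb K}(L,\,W_{i-1}),\qquad \Phi_{i}(v)(\ell)=\ell v,
\]
is a well-defined $\mathbb K$-linear map (well-definedness uses $\ell(W_{i})\subseteq W_{i-1}$), and its kernel is exactly $\{v\in V : Lv=0\}=V_{0}=W_{1}$.

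From this I would deduce the recursive bound
\[
\dim_{\mathbb K}W_{i}\ \le\ \dim_{\mathbb K}V_{0}+\dim_{\mathbb K}L\cdot\dim_{\mathbb K}W_{i-1},
\]
which together with $\dim W_{1}=\dim V_{0}<\infty$ and $\dim L<\infty$ gives, by induction on $i$, that every $W_{i}$ is finite dimensional. Since $W_{n}=V$, this finishes the proof.

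The argument is essentially linear-algebraic and the main conceptual step is the choice of the filtration $W_{i}$ together with the injection $W_{i}/W_{1}\hookrightarrow\operatorname{Hom}_{\mathbb K}(L,W_{i-1})$; once these are in place the induction is routine. The only place where care is required is the verification that $\ker\Phi_{i}=V_{0}$ (rather than something larger), and this is immediate from the definition of $V_{0}$.
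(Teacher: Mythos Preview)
Your argument is correct. The filtration $W_i=\{v\in V:L^iv=0\}$ is well defined, the map $\Phi_i$ lands in $\operatorname{Hom}_{\mathbb K}(L,W_{i-1})$ because $L\cdot W_i\subseteq W_{i-1}$, and $\ker\Phi_i=W_1=V_0$ is immediate from the definition; the recursive bound then yields $\dim W_i<\infty$ for all $i$ and hence $\dim V=\dim W_n<\infty$.

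The paper's proof is organized dually: it inducts on the nilpotency index $n$, applies the inductive hypothesis to the \emph{image} $U=L(V)$ (which satisfies $L^{n-1}U=0$ and has $U_0\subseteq V_0$ finite dimensional), and then argues that each basis element $g_i$ of $L$ maps $V$ into the finite-dimensional $U$, so $\ker g_i$ has finite codimension in $V$; intersecting over the finitely many $g_i$ shows $V_0$ has finite codimension, whence $\dim V<\infty$. Your ascending-kernel filtration and the paper's descending-image induction encode the same linear algebra, but your packaging via the single map $\Phi_i$ is a bit cleaner: it gives an explicit recursive bound $\dim W_i\le \dim V_0+(\dim L)\dim W_{i-1}$ and avoids the separate remark on intersections of finite-codimension subspaces that the paper invokes.
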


\begin{proof}
Induction on the smallest number $n$ such that $L^n(V)=0$. If $n=1$ then $LV=0$,
$V=V_{0}$, hence $\Dim(V)<\infty$ by the conditions of Lemma. Consider the
$\mathbb{K}$-subspace $U=L(V)$ of $V$. The vector space  $U_{0}=\{ u\in U \ |  \ Lu=0\}$
has obviously finite dimension over $\mathbb{K}$ and $L^{n-1}(U)=0$.
By the inductive assumption $\dim U < \infty .$ Choose a basis $\{g_1, g_2, \dots , g_k \}$ of $L$ over $\mathbb{K}$.
It follows from the above proven that $\dim V/\ker g_{i}<\infty$ because the linear operator $g_{i}$ maps $V$ into $U$ and $\dim U<\infty .$  But then $\dim V/ \cap _{i=1}^{k}\ker g_{i}<\infty$ by Remark \ref{subspaces} and therefore $V_{0}$ is of finite codimension in $V.$ Since $\dim V_{0}<\infty$ by the conditions of Lemma, we obtain $\dim V<\infty.$
\end{proof}

\begin{theorem}\label{nilpot}
Let $L$ be a nilpotent subalgebra of finite rank over $R$ from the Lie algebra $W(A)$   and $F=F(L)$ be the field of constants of $L.$
Then the Lie algebra $FL$ is finite dimensional over $F$.
\end{theorem}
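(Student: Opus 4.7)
The plan is to invoke Lemma~\ref{nillinalg} (with the field $\mathbb K$ there replaced by $F$, the proof going through verbatim) on $V:=FL$ equipped with the action of $L':=F\,\ad(D_1)|_V+\cdots+F\,\ad(D_k)|_V\subseteq \mathrm{End}_F(V)$, where $D_1,\ldots,D_k$ is the $R$-basis of $L$ supplied by Lemma~\ref{nilbasis}.

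Three of the required hypotheses are routine. First, $V$ is $L'$-invariant because $F\subseteq\ker D_i$ gives $[D_i,fE]=f[D_i,E]\in FL$ for all $f\in F$, $E\in L$. Second, $\dim_F L'\le k<\infty$. Third, $L'$ acts nilpotently on $V$: since $L$ is a nilpotent Lie algebra, some iterated composition $\ad(D_{i_1})\cdots \ad(D_{i_n})$ annihilates $L$, and because $D_i(F)=0$ the same composition kills every element of $FL$, so any $F$-linear combination of such compositions vanishes on $V$.

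The main obstacle is verifying the last hypothesis: that $V_0:=\bigcap_{i=1}^{k}\ker\bigl(\ad(D_i)|_V\bigr)$ is finite-dimensional over $F$. I would write a candidate $v\in V_0$ in the $R$-basis as $v=\sum_{l=1}^{k}b_l D_l$ with $b_l\in R$, set $[D_i,D_l]=\sum_m c^{il}_m D_m$, and use Lemma~\ref{properties} to convert $[D_i,v]=0$ into the triangular linear system
\[
D_i(b_n)+\sum_{l>n}b_l\,c^{il}_n=0\qquad (1\le i,n\le k).
\]
Part~2 of Lemma~\ref{nilbasis} forces $c^{il}_n=0$ whenever $n\ge\min(i,l)$, so the sum is empty for $n=k$ and (using $c^{kk}_{k-1}=0$) also for $n=k-1$; this gives $D_i(b_k)=D_i(b_{k-1})=0$ for every $i$, hence $b_k,b_{k-1}\in\bigcap_i\ker D_i=F$. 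Descending $n$ one step at a time, each subsequent $b_n$ is pinned into an $F$-affine subspace of $R$ whose homogeneous part is still $\bigcap_i\ker D_i=F$, so each coordinate contributes at most one dimension to $\dim_F V_0$ and in particular $\dim_F V_0\le k$.

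With all four hypotheses of Lemma~\ref{nillinalg} (over $F$) in force, the conclusion $\dim_F V<\infty$ is exactly the claim $\dim_F FL<\infty$ of the theorem. The delicate point is the $F$-finite-dimensionality of $V_0$; it rests on the strict triangularity $c^{il}_n=0$ for $n\ge\min(i,l)$ from Lemma~\ref{nilbasis}(2) together with the defining property $\bigcap_{D\in L}\ker D=F$ of the field of constants.
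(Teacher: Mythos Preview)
Your proof is correct and takes a genuinely different route from the paper's. The paper proceeds by downward induction along the chain of ideals $I_j$ from Lemma~\ref{nilbasis}: assuming $\dim_F FL/FI_j<\infty$, it applies Lemma~\ref{nillinalg} to the quotient $V=FI_j/FI_{j-1}$ under the action of the (by induction finite-dimensional) algebra $FL/FI_j$, checks that the corresponding $V_0$ is exactly one-dimensional over $F$, and concludes $\dim_F FI_j/FI_{j-1}<\infty$. You instead apply Lemma~\ref{nillinalg} a single time, globally, to $V=FL$ with the finite-dimensional space $L'=F\langle\ad D_1,\dots,\ad D_k\rangle$, and use the triangularity $c^{il}_n=0$ for $n\ge\min(i,l)$ to bound $\dim_F V_0\le k$ all at once. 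Your approach is more economical (one invocation of Lemma~\ref{nillinalg} rather than $k$); the paper's layered argument yields the slightly finer information that each successive quotient $FI_j/FI_{j-1}$ is individually finite-dimensional over $F$.

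One small remark on rigor: the ``$F$-affine subspace'' phrasing in your descent can be made precise by filtering $V_0$ via $W_j=\{v\in V_0:b_l=0\text{ for all }l>j\}$. For $v\in W_j$ the system collapses to $D_i(b_j)=0$ for every $i$, whence $b_j\in\bigcap_i\ker D_i=F$; the map $W_j\to F$, $v\mapsto b_j$, has kernel $W_{j-1}$, so $\dim_F W_j/W_{j-1}\le 1$ and $\dim_F V_0\le k$ as you claim.
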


\begin{proof}
Let $k={\rm rk}_R L$ and  $0=I_{0}\subseteq I_{1}\subseteq \ldots \subseteq I_{k}=L$ be the series of ideals of $L,$ constructed in Lemma  \ref{nilbasis}. Take a basis $\{D_1,  \dots ,D_k\}$ of $L$ over $R$ obtained in  such a way as in Lemma \ref{nilbasis}.  We  prove by induction on $i$ that $\dim _{F}FL/FI_{k-i} < \infty$.
It is true for $i=1$ by Lemma \ref{nilbasis}, part (3). Assume that $\dim _{F}FL/FI_j < \infty$ for $j=k-i$
and consider the natural action (by multiplication) of $FL$ on the $F$-space $V=FI_j/FI_{j-1}$.
It holds $[FI_j,FI_j] \subset FI_{j-1}$ by Lemma \ref{nilbasis} and  therefore
$FI_jV=0$. Hence $V$ is a module over the  finite dimensional (over $F$) Lie
algebra $FL/FI_j$. The Lie algebra  $FL/FI_j$ acts nilpotently on $V$ because the algebra $FL$ is  nilpotent.
Let $V_{0}=\{ v\in V \ | \ (FL/FI_{j})v=0\}$
and $D=a_1D_1 + \dots+ a_jD_j$ a representative of  an arbitrary element from $V_{0}\subseteq V=FI_{j}/FI_{j-1}.$
Then for any $i=1, \dots , k$ we have  $$[D_i, D]=[D_i, a_1D_1 + \dots a_jD_j]=$$
$$=[D_i, a_1D_1 + \dots a_{j-1}D_{j-1}]+a_j[D_i, D_j]+D_i(a_j)D_j\in I_{j-1}.$$
 The first and second summands in the right side of the last equality  lie in $I_{j-1},$
 so $D_i(a_j)D_j\in I_{j-1}.$ The latter means  that  $D_i(a_j)=0, i=1, \ldots , k$ and therefore $a_j \in F$ by definition of the  field $F.$
Thus $\dim_{F}V_{0} =1$ and  Lemma \ref{nillinalg} yields $\dim _{F}FI_{j}/FI_{j-1}<\infty .$ But then $\dim _{F}FL/FI_{j-1}=\dim _{F}FL/FI_{k-(i+1)}<\infty .$  When $i=k$ we obtain the inequality $\dim _{F}FL<\infty .$
\end{proof}

\begin{proposition} \label{nilfewdim}
Let $L$ be a nilpotent subalgebra of $W(A)$ and $F=F(L)$ be its field of
constants. Then:

{\rm 1.}  If ${\rm rk}_{R}L=1,$ then $L$ is abelian and $\dim_{F}FL=1.$

{\rm 2.}  If ${\rm rk}_{R}L=2,$ then there exist elements $D_1, D_2 \in FL$ and $a \in R$ such that
\[ FL=F\langle D_1, aD_1, \ldots , \frac{a^k}{k!} D_{1}, D_{2}\rangle , \  k\geq 0  \ (\mbox{if} \ k=0, \ \mbox{then put} \ FL=F\langle D_{1}, D_{2}\rangle ), \]
where $[D_1, D_2]=0$, $D_1(a)=0$, $D_2(a)=1.$
\end{proposition}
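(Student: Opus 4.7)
Part 1 is immediate from Lemma \ref{nilbasis}(3) applied with $k=1$: then $I_0 = 0$, so the statement forces $\dim_F FL = 1$, and $FL$ (hence also $L$) is abelian.

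For part 2, first apply Lemma \ref{nilbasis} to $L$ to secure an $R$-basis $\{D_1, D_2\}$ with $D_1 \in Z(L)$, $[L, D_2] \subseteq I_1 = L \cap RD_1$, and $\dim_F FL/FI_1 = 1$. Since $F \subseteq \ker D_1$, the element $D_1$ remains central in $FL$, and $I := FL \cap R D_1$ is an ideal of $FL$ of $F$-codimension $1$. Introduce the $F$-subspace $B := \{b \in R : bD_1 \in FL\}$ of $R$, which is finite-dimensional by Theorem \ref{nilpot}. The identities $[D_1, bD_1] = D_1(b) D_1$ and $[D_2, bD_1] = D_2(b) D_1$ show that $B \subseteq \ker D_1$ and that $D_2$ restricts to a well-defined $F$-linear endomorphism of $B$, nilpotent because $\mathrm{ad}\, D_2$ is nilpotent on $FL$. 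The decisive computation is
\[ \ker(D_2|_B) = B \cap \ker D_2 \subseteq \ker D_1 \cap \ker D_2 = F \subseteq B, \]
so this kernel is one-dimensional, forcing $D_2|_B$ to be a single Jordan block. With $m + 1 := \dim_F B$, pick $b_0 \in B$ with $D_2^m(b_0) = \mu \in F^{\times}$ and set $a := D_2^{m-1}(b_0/\mu) \in B$; then $D_1(a) = 0$ and $D_2(a) = 1$.

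The main obstacle is to exhibit $\{1, a, a^2/2!, \ldots, a^m/m!\}$ as an $F$-basis of $B$. I would induct on $j \leq m$: assuming $a^{j-1}/(j-1)! \in B$, note that in characteristic zero $D_2(a^j/j!) = a^{j-1}/(j-1)!$, and that this element lies in $\mathrm{Im}(D_2|_B)$ because applying $D_2^m$ to it yields zero, so its top Jordan component vanishes. Picking any preimage $c \in B$ of it, the element $a^j/j!$ also lies in $\ker D_1$ (since $a \in \ker D_1$ and $\ker D_1$ is a subring of $R$), while $c - a^j/j! \in \ker D_2$; therefore $c - a^j/j! \in \ker D_1 \cap \ker D_2 = F \subseteq B$, forcing $a^j/j! \in B$. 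Linear independence of $\{1, a, \ldots, a^m/m!\}$ follows by iterated application of $D_2$, and by matching dimensions this is an $F$-basis of $I$; adjoining $D_2$ yields the claimed description of $FL$, together with $[D_1,D_2] = 0$ (centrality), $D_1(a) = 0$, and $D_2(a) = 1$.
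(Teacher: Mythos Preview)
Your proof is correct and follows essentially the same strategy as the paper: invoke Lemma~\ref{nilbasis} to obtain the chain $0\subset I_1\subset L$ with $\dim_F FL/FI_1=1$, observe that $\ad D_2$ acts on $FI_1$ (equivalently, $D_2$ acts on your coefficient space $B$) as a single nilpotent Jordan block because its kernel equals $F$, and then replace the Jordan basis by the powers $a^j/j!$ via the identity $D_2(a^j/j!)=a^{j-1}/(j-1)!$ together with $\ker D_1\cap\ker D_2=F$. The only cosmetic differences are that the paper treats the case $\mathrm{rk}_R Z(L)=2$ separately (your argument absorbs it into the $m=0$ case) and leaves the finite-dimensionality of $FI_1$ implicit, whereas you cite Theorem~\ref{nilpot} explicitly.
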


\begin{proof}
 1. It follows from Lemma \ref{nilbasis}, part 3.

   2. Let ${\rm rk} _{R}L=2.$  Suppose that  ${\rm rk} _{R}Z(L)=2 $ and let $\{ D_{1}, D_{2}\}$ be a basis of $Z(L)$ over $R.$ Put $I_{k}=RD_{k}\cap L, \ k=1, 2.$ Since $I_{1}\cap I_{2}=0$ and $\dim _{F}FL/FI_{k}=1, k=1, 2$ by Lemma \ref{nilbasis} we see that $\dim _{F}FL=2$ and $FL=F\langle D_{1}, D_{2}\ | \ [D_{1}, D_{2}]=0\rangle$  is of type 2 of Lemma. Let now ${\rm rk} _{R}Z(L)=1,$  $D_{1}\in Z(L)$ be a nonzero element and $I_{1}=RD_{1}\cap L.$ Then $I_{1}$ is an ideal of $L$ and $\dim _{F}FL/FI_{1}=1$   by Lemma \ref{nilbasis}. Choose any nonzero element $D_{2}\in L\setminus I_{1}.$ The elements $D_{1} , D_{2}$ form a basis of $L$ over $R$ and $[D_{1}, D_{2}]=0.$ Since the Lie algebra $L$ is nilpotent the operator  $\ad D_{2}$ acts nilpotently on the abelian ideal $FI_{1}$ of the algebra $FL$ over the field $F.$

Let us show that $\ad D_{2}$ has in some basis of $FI_{1}$ (over $F$) the matrix which is a single Jordan block. Really, any Jordan chain  for $\ad D_{2}$ on $FI_{1}$ contains an element of the form $aD_{1}$ such that $[D_{2}, aD_{1}]=0.$ But then $D_{2}(a)=0$ and taking into account the equality $D_{1}(a)=0$ we get $a\in F.$  The latter means that $\ad D_{2}$ has the only Jordan chain $\{D_{1}, a_{1}D_{1}, \ldots , a_{k}D_{1}\}$ on $I_{1}$ with $a_{i}\in R$  and its matrix in this  basis  is a single Jordan block. Since  $[D_{2}, D_{1}]=0,  [D_{2}, a_{1}D_{1}]=D_{1}, \ldots , [D_{2}, a_{k}D_{1}]=a_{k-1}D_{1}$  we have
$$D_{2}(a_{1})=1, D_{2}(a_{2})=a_{1}, \ldots , D_{2}(a_{k})=a_{k-1}.$$
Denoting $a=a_{1}$ we obtain $D_{2}(a_{2}-a^{2}/2!)=0.$ Since also $D_{1}(a_{2}-a^{2}/2!)=0$ we get  $a_{2}-a^{2}/2!\in F.$  But then without loss of generality we can take $a_{2}=a^{2}/2!.$  Repeating these considerations we obtain a basis $\{ D_{1}, aD_{1}, \ldots  , (a^{n}/n!)D_{1}\}$ of the ideal $FI_{1}.$
\end{proof}

\begin{remark}
Let $A=\mathbb K[x_{1}, \ldots , x_{n}]$ be the polynomial algebra in $n$ variables  over $\mathbb K.$ Then $\Der A={W}_n(\mathbb{K})$ is the Lie algebra of all vector fields on $\mathbb K^{n}$ with polynomial  coefficients.  Take the elements $D_{1}=\frac{\partial}{\partial x_{1}},  D_{2}=\frac{\partial}{\partial x_{2}}$  from the Lie algebra ${W}_n(\mathbb{K})$ and put $a=x_{2}\in A.$  It is obvious that  the Lie algebra
$$ L_{n}=F\langle \frac{\partial}{\partial x_{1}}, x_{2}\frac{\partial}{\partial x_{1}}, \ldots , (x_{2}^{n}/n!)\frac{\partial}{\partial x_{1}}, \frac{\partial}{\partial x_{2}}\rangle ,$$  where $F=\mathbb K(x_{3}, \ldots , x_{n})$   is the field of constants for $L,$ is nilpotent of nilpotency class $n-2.$
If we consider the union $L=\cup_{i=1}^{\infty}L_{i}$ of the ascending chain of Lie algebras $L_{1}\subset L_{2}\subset L_{3}\subset \ldots \subset L_{n}\subset \ldots ,$ then the algebra $L$ is solvable of derived length $2$ and infinite dimensional over $F.$
\end{remark}

\section{Solvable subalgebras of $W(A)$}

\begin{lemma}\label{solfewdim}
Let $L$ be a solvable subalgebra of rank $1$ over $R$ from the Lie algebra $W(A)$
and $F=F(L)$ be its  field of constants. Then:

{\rm 1.}  If $L$ is abelian,  then $FL$ is one dimensional over $F.$

{\rm  2.}   If $L$ is nonabelian, then $\dim _{F}FL=2.$ In particular, $s(L)=2.$
\end{lemma}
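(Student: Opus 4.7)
The plan is as follows. Since $L \neq 0$ has rank $1$ over $R$, pick any nonzero $D_0 \in L$; then $L \subseteq RD_0$ and every nonzero element of $L$ has the form $aD_0$ with $a \in R \setminus \{0\}$. Because $R$ is an integral domain, each such $aD_0$ has $\ker(aD_0) = \ker D_0$, so the field of constants satisfies $F = \ker D_0$; by the same reasoning, $F(I) = F$ for every nonzero ideal $I$ of $L$. Part 1 is then immediate: an abelian algebra of rank $1$ is nilpotent of rank $1$, so Proposition \ref{nilfewdim}, part 1, yields $\dim_F FL = 1$.

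For Part 2, I would use the following key observation: for $D=bD_0$ and $D'=b'D_0$ with $b'\neq 0$, the bracket $[D,D']=(bD_0(b')-b'D_0(b))D_0$ vanishes if and only if $D_0(b/b')=0$, i.e.\ if and only if $b\in Fb'$. The argument then proceeds by induction on $s=s(L)\geq 2$.

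For the base case $s=2$, the ideal $L^{(1)}=[L,L]$ is abelian of rank $1$, so by Proposition \ref{nilfewdim}, part 1, together with $F(L^{(1)})=F$, we obtain $FL^{(1)}=FD_1$ for some $D_1$. The adjoint action of $FL$ on the $F$-line $FL^{(1)}$ defines a homomorphism $\lambda\colon FL \to F$ via $[D,D_1]=\lambda(D)D_1$. Since $\lambda$ is a Lie-algebra homomorphism into the abelian $F$, $\ker\lambda \supseteq [FL,FL]=FL^{(1)}$; the key observation gives the reverse inclusion, so $\ker\lambda=FL^{(1)}$. Nonabelianness forces $\lambda \neq 0$, whence $\dim_F FL/FL^{(1)}=1$ and therefore $\dim_F FL=2$, so $s(L)=2$.

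For the inductive step $s\geq 3$, the induction hypothesis applied to $L^{(1)}$ (which is solvable, rank $1$, nonabelian of derived length $s-1\geq 2$) would force $s(L^{(1)})=2$ and $\dim_F FL^{(1)}=2$, hence $s=3$. Then $L^{(2)}\neq 0$ is abelian of rank $1$, giving $FL^{(2)}=FD_1$, and the analogous homomorphism $\lambda\colon FL \to F$ attached to this line vanishes on $[FL,FL]=FL^{(1)}$. Thus $[D,D_1]=0$ for every $D \in FL^{(1)}$; the key observation then forces $D\in FD_1=FL^{(2)}$, yielding $FL^{(1)}\subseteq FL^{(2)}$ and contradicting $\dim_F FL^{(1)}=2>1=\dim_F FL^{(2)}$. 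The main obstacle is verifying that the field of constants does not enlarge along the derived series $L \supset L^{(1)} \supset L^{(2)}$; this is ensured precisely by the rank-$1$ identity $F(I)=F$ for nonzero ideals, which lets the whole argument be carried out over the single base field $F$.
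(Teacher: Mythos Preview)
Your proof is correct. It differs from the paper's argument in Part~2 mainly in its organization. The paper fixes a maximal abelian ideal $I\subset L$, notes that $C_L(I)=I$, and then shows directly that any two elements $b_1D_1,b_2D_1\in L\setminus I$ satisfy $D_1(a_1^{-1}b_1-a_2^{-1}b_2)=0$ (with $a_i=D_1(b_i)\in F\setminus\{0\}$), so they are $F$-dependent modulo $FI$; this immediately gives $\dim_F FL=2$ without any induction. Your route instead isolates two reusable facts, namely that in rank~$1$ every nonzero subalgebra has the \emph{same} field of constants $F=\ker D_0$, and that $[bD_0,b'D_0]=0$ iff $b\in Fb'$, and then runs everything through the character $\lambda\colon FL\to F$ attached to the one-dimensional ideal $FD_1$, identifying $\ker\lambda=FD_1$ and using $\lambda|_{[FL,FL]}=0$ to force $\dim_F FL=2$ in the base case and to derive a contradiction for $s\ge 3$. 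The paper's argument is shorter because the self-centralizing property of a maximal abelian ideal absorbs what you do by induction; your argument is a bit more conceptual and makes explicit why no enlargement of the constant field occurs along the derived series, a point the paper leaves implicit.
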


\begin{proof}
1.  Let $L$ be abelian. Since $L$ is nilpotent of rank $1$ over $R,$ it follows from Proposition \ref{nilfewdim} that $\dim _{F}FL=1.$

2. Suppose that $L$ is nonabelian and take a maximal (by inclusion) abelian ideal $I\subset L$ and a nonzero element $D_{1}\in I.$ Then $FI=FD_{1}$ is of dimension $1$ over $F$ by the above proven. Choose any two elements $b_{1}D_{1}, b_{2}D_{1}\in L\setminus I$  (recall that all elements of $L$ are of the form $aD_{1}$ for some $a\in R$).
Since $I$ is a maximal abelian ideal we have $C_{L}(I)=I$ and therefore $[D_{1}, b_{i}D_{1}]=D_{1}(b_{i})D_{1}\not= 0, i=1,2.$ Denoting  $D_{1}(b_{i})=a_{i}, i=1, 2$ we obtain from the last relations that $a_{1}, a_{2}$  are nonzero elements of the field $F=\ker D_{1}.$ But then $D_{1}(a_{1}^{-1}b_{1}-a_{2}^{-1}b_{2})=0$ and therefore $a_{1}^{-1}b_{1}-a_{2}^{-1}b_{2}\in F.$ The latter means that the elements $b_{1}D_{1}, b_{2}D_{1}$ are linearly dependent over $F$ and  $FL$ is nonabelian of dimension $2$ over $F.$
\end{proof}
\begin{remark}
How to construct solvable subalgebras of rank $1$ from $W(A)$? The answer is as following: to build an abelian Lie algebra one should take any $\mathbb K$-subspace $V$ from the subfield $\ker D_{1}$ and set $L=VD_{1}.$ Then $L$ is abelian and every abelian Lie algebra of rank $1$ over $R$ can be obtained in such a way.  To construct a nonabelian Lie algebra one should take a derivation $D_{1}$ possessing  an element $b\in R$ such that $D_{1}(b)=1.$
Then $L$ is a subalgebra of the Lie algebra $(\ker D_{1}) D_{1}+(b\ker D_{1})D_{1}.$  The latter  Lie algebra is isomorphic to the general affine Lie algebra $ga_{1}(\ker D_{1}).$
\end{remark}

\begin{lemma}\label{quotient}
Let $L$ be a solvable subalgebra of rank $k$ over $R$ from the Lie algebra $W(A),$  $F=F(L)$  be its field of constants  and $I$  an ideal
of $L$ such that $I=RI \cap L.$ If  ${\rm rk}_{R}I=k-1,$  then $\dim _{F}FL/FI\leq 2,$
  in particular, $s(L/I)\leq 2.$  Besides, if $\dim _{F}FL/FI=2,$ then $s(L/I)=2.$
\end{lemma}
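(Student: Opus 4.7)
The plan is to realise $L/I$ as a rank-one solvable subalgebra of $W(F_I)$, where $F_I:=\bigcap_{D\in I}\ker D$ is the field of $I$-constants in $R$, and then to apply Lemma~\ref{solfewdim} to obtain both the bound on $s(L/I)$ and on $\dim_F FL/FI$.

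First I would fix an $R$-basis $\{D_0,D_1,\dots,D_{k-1}\}$ of $RL$ with $D_1,\dots,D_{k-1}\in I$ (possible since ${\rm rk}_R I=k-1$ and $I=RI\cap L$) and $D_0\in L\setminus I$. Define $\psi:L\to R$ by decomposing $D=\psi(D)D_0+X$ with $X\in RI$; then $\ker\psi=L\cap RI=I$, and a direct bracket computation via Lemma~\ref{properties} yields the cocycle identity
\[
\psi([D,D'])=D\bigl(\psi(D')\bigr)-D'\bigl(\psi(D)\bigr),\qquad D,D'\in L.
\]
Setting $A_0:=\psi(L)$, the cocycle specialised to $D\in I$ gives $D(\psi(D'))=0$ for all $D'\in L$, so $A_0\subseteq F_I$. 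Because $[I,D_0]\subseteq I$, one also checks that $D_0$ maps $F_I$ into $F_I$, so $D_0|_{F_I}$ is a derivation of the field $F_I$; and the identity $D|_{F_I}=\psi(D)\cdot D_0|_{F_I}$ (which follows from $X(a)=0$ for $X\in RI$, $a\in F_I$) immediately gives $\ker(D_0|_{F_I})=F$.

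Next I would package this into a Lie algebra embedding
\[
\iota:L/I\hookrightarrow W(F_I),\qquad \bar D\mapsto\psi(D)\cdot D_0|_{F_I},
\]
whose image $\tilde L:=A_0\cdot D_0|_{F_I}$ is a subalgebra of $W(F_I)$ of rank one over $F_I$; the bracket formula above confirms that $\iota$ is an isomorphism of Lie algebras onto $\tilde L$. Since $\tilde L\cong L/I$ is solvable and its field of constants in $F_I$ equals $F$, Lemma~\ref{solfewdim} applies to $\tilde L$ and delivers $\dim_F F\tilde L\le 2$ and $s(\tilde L)\le 2$, with $s(\tilde L)=2$ exactly when $\dim_F F\tilde L=2$. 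The bound $s(L/I)\le 2$ (and its sharpening in the case $\dim_F F\tilde L=2$) now follow immediately by transport along $\iota$.

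To upgrade these to the dimensional bound $\dim_F FL/FI\le 2$, I would extend $\iota$ $F$-linearly to a surjection
\[
\bar\Phi:FL/FI\twoheadrightarrow F\tilde L,\qquad \overline{\textstyle\sum_i f_i D_i}\mapsto \textstyle\sum_i f_i\psi(D_i)\,D_0|_{F_I},
\]
and show that $\bar\Phi$ is injective; this is equivalent to the identity $FL\cap RI=FI$, and is the main obstacle. I would attack it as follows: given $E=\sum_i f_iE_i\in FL\cap RI$ with $f_i\in F$, $E_i\in L$ and $\sum_i f_i\psi(E_i)=0$, discard any term with $E_i\in I$ (which lies in $FI$) and assume all $E_i\in L\setminus I$; then the $F$-linear relation among the $\psi(E_i)\in A_0\subseteq F_I$ can be used, together with the $F$-bilinearity of the bracket on $FL$ (valid because $F=\bigcap_{D\in L}\ker D$), to rewrite $E$ iteratively as an $F$-linear combination of elements of $I$. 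Granted this identification, $FL/FI\cong F\tilde L$ yields $\dim_F FL/FI\le 2$; and if $\dim_F FL/FI=2$, the isomorphism forces $\dim_F F\tilde L=2$, whence $s(L/I)=s(\tilde L)=2$ by the last clause of Lemma~\ref{solfewdim}.
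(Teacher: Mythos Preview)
Your reduction of $L/I$ to a rank-one object is essentially the paper's own first step: the paper forms the subalgebra
\[
M=\{a_kD_k\mid \exists\,a_1,\dots,a_{k-1}\in R\ \text{with}\ a_1D_1+\cdots+a_kD_k\in L\}\subset RL,
\]
observes $L/I\simeq M$, and invokes Lemma~\ref{solfewdim} to get $s(L/I)\le 2$. Your $\tilde L=A_0\cdot D_0|_{F_I}$ is the same object restricted to the $I$-constants, and your derivation of the cocycle identity and of $\psi(L)\subseteq F_I$ is correct. (One small omission: you must treat the degenerate case $D_0|_{F_I}=0$ separately, since then $\iota$ is the zero map; but then $F_I=F$, so $A_0\subseteq F$ and the conclusions are easy.)

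The genuine gap is the injectivity of $\bar\Phi$, i.e.\ the identity $FL\cap RI=FI$. Your sketch --- ``use the $F$-linear relation among the $\psi(E_i)$ together with $F$-bilinearity of the bracket to rewrite $E$ iteratively as an $F$-combination of elements of $I$'' --- does not actually produce elements of $I$: from $\sum_i f_i\psi(E_i)=0$ with $f_i\in F$ and $E_i\in L\setminus I$ one obtains only $\sum_i f_iE_i\in FL\cap RI$, and there is no bracket manipulation that forces the individual summands (or any recombination of them) into $I=RI\cap L$, because the coefficients $f_i$ lie in $F$ rather than $\mathbb K$. In other words, you have established $\dim_F FL/(FL\cap RI)=\dim_F FA_0\le 2$, but not $\dim_F FL/FI\le 2$; the two statements differ exactly by the kernel you have not controlled.

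The paper sidesteps this difficulty entirely: instead of comparing $FL/FI$ with $F\tilde L$, it computes directly. Taking any nonzero abelian ideal $J/I$ of $L/I$ and writing $D=\sum_i a_iD_i\in J\setminus I$, the relations $[D_i,D]\in I$ for $i\le k-1$ and $[D_k,D]\in I$ (with $D_k\in J$) force $D_i(a_k)=0$ for all $i$, hence $a_k\in F$. This gives $\dim_F FJ/FI=1$ outright, and a short further argument (again by reading off the $D_k$-coefficient) bounds $\dim_F FL/FI$ by $2$. If you want to rescue your approach, the cleanest fix is to insert exactly this coefficient computation to prove $FL\cap RI=FI$; once that is in place, the rest of your argument goes through.
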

\begin{proof}
Take an $R$-basis $\{ D_1, \dots , D_k\}$ of $L$ such that the elements $ D_1, \dots , D_{k-1}$ form  an $R$-basis of $I$.
Consider the following $\mathbb K$-subspace $M \subset RL$:
$$M=\lbrace a_kD_k  \ | \ \exists a_1, \ldots , a_{k-1} \ \mbox{with} \  a_1D_1 + \cdots +a_{k-1}D_{k-1}+a_kD_k \in L\rbrace .$$
It is easy to see that $M$ is a subalgebra of rank $1$ over $R$ from  the Lie algebra $RL.$ Since the subalgebra $M $  has derived length $\leq 2$ by Lemma \ref{solfewdim} and $L/I\simeq M$  we get that $s(L/I)\leq 2.$

Take any nonzero abelian ideal $J/I$ of $L/I.$  Any element $D\in J\setminus I$ can be written in the form  $D=a_{1}D_{1}+\ldots +a_{k}D_{k}$ with $a_{i}\in R$ and $a_{k}\not= 0.$ Then $$
[D_{i}, D]=[D_{i},(\sum _{i=1}^{k-1}a_{i}D_{i})+a_{k}D_{k}]=[D_{i}, (\sum _{i=1}^{k-1}a_{i}D_{i})]+[D_{i}, a_{k}D_{k}]=$$ $$=i_{1}+D_{i}(a_{k})D_{k}+a_{k}[D_{i}, D_{k}],   \ i=1, \ldots , k-1,$$
where $i_{1}=[D_{i}, \sum _{i=1}^{k-1}D_{i}]\in I$ by the choice of the  basis $\{ D_{1}, \ldots , D_{k}\} .$
Since $[D_{i}, D]\in I, i=1, \ldots , k-1$ we get  $D_{i}(a_{k})=0, \ i=1, \ldots , k-1.$  Further, we can assume without loss of generality  that $D_{k}\in J.$ As  $J/I$ is abelian  we see that $[D_{k}, a_{k}D_{k}]\in I$ and therefore $D_{k}(a_{k})=0.$ Then $a_{k}\in F$ and $\dim _{F}FJ/FI=1.$
In particular, if $\dim _{F}FL/FI=2,$ then $s(L/I)=2.$
If the quotient algebra $L/I$ is abelian, then it holds  $\dim _{F}FL/FI=1.$  Let $FL/FI$ be nonabelian. Then its derived subalgebra is abelian and therefore is one-dimensional over $F$ by Lemma \ref{solfewdim}. We may assume that $FD_{k}+FI/FI$ is the derived subalgebra of $FL/FI.$  If $\dim _{F}FL/FI>2,$ then there exists an element $i_{1}+aD_{k}\in FL\setminus (FD_{k}+FI)$ with $i_{1}\in RI$ and $a\in R$ such that $[D_{k}, i_{1}+a_{k}D_{k}]\in I.$ The latter  means that $D_{k}(a)=0$ and taking into account the equalities $D_{1}(a_{k})=0, i=1, \ldots , k-1$ obtained  analogously,  we see that $a_{k}\in F.$ But then  $i_{1}+a_{k}D_{k}\in FD_{k}+FI.$ The latter is impossible by the choice of this element and the obtained contradiction shows that $\dim _{F}FL/FI=2.$
\end{proof}

The next statement can be easily deduced from the Lie's Theorem for solvable Lie algebras over fields of characteristic zero and its modification over fields of positive characteristic (see, for example \cite{Hum}, Theorem 4.1 and Exercise 2 on p.20). We do not require the  ground field to be algebraically closed because one can always consider all the objects over the algebraic closure $\overline {\mathbb K}.$
\begin{lemma} \label{matrixalgebra}
Let $\mathbb K$ be a field, $V$ be a vector space of dimension $n$ over $\mathbb K$ and $L$ a solvable subalgebra of $gl_{n}(\mathbb K).$  If ${\rm char} \mathbb K=0$ or ${\rm char} \mathbb K>n,$ then $s(L)\leq n.$
\end{lemma}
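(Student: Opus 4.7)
The plan is to reduce the statement, via Lie's theorem, to the case of upper triangular matrices and then bound the derived length of that algebra through its lower central series.

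First I would extend scalars to the algebraic closure: since $s(\overline{\mathbb K}\otimes_{\mathbb K}L)=s(L)$, it suffices to treat a solvable subalgebra of $gl_n(\overline{\mathbb K})$. The hypothesis that ${\rm char}\,\mathbb K=0$ or ${\rm char}\,\mathbb K>n$ is precisely what allows the cited form of Lie's theorem (Humphreys, Thm.~4.1 and Ex.~2 on p.~20) to be applied: there is a common eigenvector of $L$ on $V$, and inducting on $\dim V$ yields a complete flag stabilized by $L$. In an adapted basis $L$ is therefore contained in the algebra $b_n$ of upper triangular $n\times n$ matrices.

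Next I would use the elementary identity $[b_n,b_n]=n_n$, where $n_n$ denotes the strictly upper triangular matrices, so $[L,L]\subseteq n_n$. Filter $n_n$ by the subspaces $N_s$ consisting of matrices whose entries on the main diagonal and on the first $s-1$ super-diagonals vanish, so that $N_1=n_n$ and $N_n=0$. A direct matrix calculation gives $[N_s,N_t]\subseteq N_{s+t}$, and an easy induction on $k$ then shows that the $k$-th term $\gamma_k(n_n)$ of the lower central series of $n_n$ is contained in $N_k$. Hence $\gamma_n(n_n)=0$, i.e.\ $n_n$ is nilpotent of class at most $n-1$. Since in any Lie algebra one has $L^{(k)}\subseteq\gamma_{k+1}(L)$ (trivial induction from $L^{(k+1)}=[L^{(k)},L^{(k)}]\subseteq[L,L^{(k)}]$), the derived length is bounded by the nilpotency class; in particular $s(n_n)\leq n-1$, and therefore
\[s(L)\leq 1+s([L,L])\leq 1+s(n_n)\leq n.\]

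The only substantive ingredient is Lie's theorem itself; the characteristic hypothesis is exactly the condition that makes it available, and once triangularization is secured the remainder is a short filtration argument. I do not foresee any genuine obstacle beyond correctly invoking the theorem over $\overline{\mathbb K}$ (for which the paper has already flagged the relevant reference) and verifying the elementary inclusion $[N_s,N_t]\subseteq N_{s+t}$.
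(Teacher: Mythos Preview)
Your proposal is correct and follows exactly the approach the paper indicates: the paper does not give a detailed proof but simply remarks that the lemma is easily deduced from Lie's Theorem (and its positive-characteristic version, Humphreys Thm.~4.1 and Ex.~2 on p.~20) after passing to the algebraic closure, which is precisely your extension-of-scalars step followed by triangularization. Your filtration argument on $n_n$ just makes explicit the ``easy deduction'' the paper alludes to.
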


\begin{theorem}\label{derlensol}
Let $L$ be a solvable subalgebra of rank $k$ over $R$ of the Lie algebra $W(A).$
If the ground field $\mathbb K$ is of  characteristic zero, then  the derived length s(L) of $L$ does not exceed $2k$. Moreover, if  $L$ is finite dimensional over its field   of constants,
then $s(L)\leq k+1.$
\end{theorem}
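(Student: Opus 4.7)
The plan is to treat the two bounds separately, in each case by induction on the rank $k$.

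For the bound $s(L)\le 2k$, the base $k=1$ is Lemma \ref{solfewdim}. For $k\ge 2$, I would aim to locate a saturated ideal $I$ of $L$ (meaning $I=RI\cap L$) with ${\rm rk}_R I=k-1$; once such an $I$ is in hand, Lemma \ref{quotient} gives $s(L/I)\le 2$ and the inductive hypothesis gives $s(I)\le 2(k-1)$, so the standard inequality $s(L)\le s(I)+s(L/I)$ delivers $s(L)\le 2k$.

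Producing $I$ is routine when ${\rm rk}_R[L,L]\le k-1$: extend $[L,L]$ as a $\mathbb K$-subspace of $L$ to a $\mathbb K$-subspace $I_0\subseteq L$ of $R$-rank $k-1$ (possible because $RL/R[L,L]$ has positive $R$-dimension); since $I_0\supseteq[L,L]$ it is automatically an ideal of $L$, and $I:=RI_0\cap L$ is saturated of the same rank by Lemma \ref{intersectionideal}. The genuine difficulty is the case ${\rm rk}_R[L,L]=k$, where the above enlargement cannot produce a proper ideal of rank $k-1$. There I would descend in the derived series: let $m$ be the least index with $RL^{(m)}\subsetneq RL$ and set $J_m:=RL^{(m)}\cap L$; since $L^{(m)}\subseteq J_m$ one has $s(L/J_m)\le m$, and induction on rank yields $s(J_m)\le 2(k-1)$, so $s(L)\le m+2(k-1)$, and the target $s(L)\le 2k$ becomes the assertion $m\le 2$. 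The key step is therefore to prove $RL^{(2)}\subsetneq RL$ whenever $RL^{(1)}=RL$; this is the main obstacle I anticipate, and I would attack it by a bracket computation in the spirit of Lemma \ref{nilbasis}(4), choosing an $R$-basis of $RL$ drawn from $L^{(2)}$ and invoking Lie's theorem (Lemma \ref{matrixalgebra}) on a suitable finite-dimensional quotient to force the leading coefficients of elements of $L$ into the field of constants $F$.

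For the sharper bound $s(L)\le k+1$ under the hypothesis $\dim_F FL<\infty$, the situation is much cleaner. Here $FL$ is a finite-dimensional solvable Lie algebra over $F$, so by Lie's theorem (Lemma \ref{matrixalgebra}, applied over $\overline F$ to the adjoint representation of $FL$ on itself) the derived subalgebra $[FL,FL]$ acts by nilpotent endomorphisms. Engel's theorem then identifies $[FL,FL]$ as a nilpotent Lie subalgebra of $W(A)$, and since its $R$-rank is at most $k$, Corollary \ref{derlennil} gives $s([FL,FL])\le k$. Combining these, $s(L)\le s(FL)\le s([FL,FL])+1\le k+1$.
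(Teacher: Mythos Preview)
Your treatment of the bound $s(L)\le k+1$ is correct and is exactly the paper's argument, with the welcome addition of the standard Lie--Engel justification that $[FL,FL]$ is nilpotent (the paper simply asserts this).

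The bound $s(L)\le 2k$, however, has a genuine gap. Your inductive scheme needs a saturated ideal $I$ of rank $k-1$, and you correctly note that this is easy to produce when ${\rm rk}_R[L,L]\le k-1$. But in the hard case ${\rm rk}_R[L,L]=k$ you do not produce such an ideal; instead you propose to show that $RL^{(2)}\subsetneq RL$ whenever $RL^{(1)}=RL$, and you explicitly leave this unproved. Even granting that claim, the saturated ideal $J_m=RL^{(m)}\cap L$ you obtain may have rank strictly less than $k-1$, and then your inequality $s(L)\le m+2\,{\rm rk}_R J_m$ need not give $2k$ unless you also control how far the rank drops. More importantly, the sketch you offer for proving $m\le 2$ (``a bracket computation in the spirit of Lemma~\ref{nilbasis}(4)'' together with Lie's theorem on some quotient) does not come with a candidate finite-dimensional space on which to invoke Lemma~\ref{matrixalgebra}, and without that no such argument gets off the ground.

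The paper bypasses this difficulty by building the chain from below rather than descending from above. One takes an abelian ideal $J_1\subseteq L$ of \emph{maximal rank} $k_1$, sets $I_1=RJ_1\cap L$, then chooses in $L/I_1$ an abelian ideal $J_2/I_1$ of maximal rank, sets $I_2=RJ_2\cap L$, and so on, obtaining
\[
0\subset J_1\subseteq I_1\subset J_2\subseteq I_2\subset\cdots\subset J_s\subseteq I_s=L,
\qquad {\rm rk}_R I_j={\rm rk}_R J_j=k_j.
\]
The key computation (entirely analogous to the leading--coefficient arguments in Lemmas~\ref{nilbasis} and~\ref{quotient}) shows that the abelian layer $J_s/I_{s-1}$ has $F$-dimension exactly $k_s-k_{s-1}$ and that its centralizer in $L/I_{s-1}$ is itself. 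Hence $L/J_s$ embeds faithfully into $gl_{\,k_s-k_{s-1}}(F)$, and Lemma~\ref{matrixalgebra} gives $s(L/J_s)\le k_s-k_{s-1}$. Together with $s(I_{s-1})\le 2k_{s-1}$ (induction) and $s(J_s/I_{s-1})=1$, this yields $s(L)\le 2k$. The point is that the maximal abelian ideal supplies, for free, the finite-dimensional module needed for Lie's theorem; your top--down approach via the derived series never produces such a module.
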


\begin{proof}
Since  $s(L)=s(FL)$ we  can assume  $L=FL$.
Let $J_1$ be an abelian ideal of $L$ of maximal rank over $R$, let  ${\rm rk}_R J_1=k_1$.
Take a basis $D_1, \dots , D_{k_1}$ of $J_1$ over $R$  and denote $I_1=RJ_1 \cap L$.
Then $I_1$ is also an ideal of $L$ by Lemma \ref{intersectionideal} and ${\rm rk}_{R}I_{1}=k_{1}.$
 Let $J_2/I_1$ be an abelian ideal of $L/I_1$ such that $J_{2}$ has maximal rank over $R.$ Denote
$I_2=RJ_2 \cap L.$
 Then $I_{2}$ is an ideal of $L$ of rank $k_{2}$ over $R.$ As above take a basis $D_{k_1 +1}, \dots , D_{k_2}$ of $J_2/I_1$. Continuing this consideration  we can construct the series of ideals:
\[0 \subset J_1 \subseteq  I_1  \subset \dots \subset J_s \subseteq  I_s=L,\]
with ${\rm rk}_R I_j={\rm rk}_R J_j =k_j$, $J_j / I_{j-1}$ is abelian, $I_j=RJ_j \cap L, j=1, \ldots , s.$
Simultaneously we obtain an R-basis $\lbrace D_1, \dots , D_{k_s} \rbrace$ of $L$ such that
$D_{k_{j-1} +1}, \dots , D_{k_j}$ is a basis of $J_j/I_{j-1}$, $j=1 \dots s$.

Let us prove the statement of Theorem  by induction on $s$. If $s=0$ then $L=\lbrace 0 \rbrace$ and the proof is completed.
Let $s \geq 1$. By the inductive assumption  $s(I_{s-1})\leq 2k_{s-1}.$
Let us show that the abelian ideal $J_s / I_{s-1}$ is of dimension $k_s-k_{s-1}$ over $F.$
Really,  for any element
$$D= c_1D_1 + \dots +c_{k_{s-1}} D_{k_{s-1}}+ c_{k_{s-1}+1} D_{k_{s-1}+1}+ \dots + c_{k_s}D_{k_s} \in J_s$$
we have $[D_j,D]\in I_{s-1}$, $j=1 \dots k_s$. One can write:
\[[D_j,D]=\sum_{i=k_{s-1}+1}^{k_s} (D_j(c_i)D_i +c_{i}[D_{j}, D_{i}])+ [D_j,\sum_{i=1}^{k_{s-1}}c_iD_i]. \]

Since $[D_{j}, D_{i}]\in I_{s-1}, i=k_{s-1}+1, \ldots , k_{s}$ and the second sum in the right side lies in $I_{s-1}$
we obtain that  $D_j(c_i)=0$, $j=1 \dots , k_s$, $i=k_{s-1} +1, \dots , k_s.$ Hence $c_i \in F$,
$i=k_{s-1} +1, \dots , k_s$
by definition of $F.$  Thus $\dim _{F}J_s/I_{s-1} =k_s -k_{s-1}$.

Note that we  have also proved that the centralizer of $J_s/I_{s-1}$ in the Lie algebra
$L/I_{s-1}$ coincides with $J_s/I_{s-1}$. Therefore
$L/J_s$ acts exactly  on the $F$-vector space $J_s/I_{s-1}$ of dimension $k_{s}-k_{s-1}$ over $F.$
 Since  $C_{L/J_{s}}(J_{s}/I_{s-1})=J_{s}/I_{s-1},$ then the solvable Lie algebra $L/J_{s}$ can be  be embedded  isomorphically into the general linear Lie algebra $gl_{k_{s}-k_{s-1}}(F).$  As solvable subalgebras of this Lie algebra have derived length $\leq k_{s}-k_{s-1}$ (by Lemma \ref{matrixalgebra}),  we see that $s(L/J_{s})\leq k_{s}-k_{s-1}.$  But then  $s(L)\leq 2k_{s-1}+ k_s-k_{s-1} \leq 2k_s=2k.$

If $L$ is finite dimensional over $F,$  then [L,L] is nilpotent of  derived length $\leq k$ by Corollary \ref{derlennil}. Therefore $s(L)\leq k+1$. This completes the proof of Theorem.
\end{proof}
\begin{remark}
The first part of Theorem \ref{derlensol} remains valid also in the case of positive characteristic of the ground field $\mathbb K$ provided that ${\rm char }\mathbb K>k$  (because its proof uses only Lemma \ref{matrixalgebra} with this restriction on the rank $k$).

\end{remark}
\begin{corollary}\label{maincor}
Let $\mathbb K$ be a field and $A$ be one of the following   algebras over  ${\mathbb K}:$
(1) $\mathbb K[x_{1}, \ldots , x_{n}]$ the polynomial algebra;
(2) $\mathbb K[[x_{1}, \ldots , x_{n}]]$ the algebra of formal power series;
(3) $\mathbb K(x_{1}, \ldots , x_{n})$ the field of rational functions;
(4) $\mathbb K((x_{1}, \ldots , x_{n}))$ the fraction field  of the algebra  $\mathbb K[[x_{1}, \ldots , x_{n}]].$
Let $\mathfrak{D}(A)$ be the Lie algebra of all $\mathbb K$-derivations $D$ of $A$ of the form $D=f_{1}\frac{\partial}{\partial x_{1}}+\cdots +f_{n}\frac{\partial}{\partial x_{n}}$ with $f_{i}\in A$ (in cases
(1) and (2) $\mathfrak{D}(A)$ obviously coincides with $\Der A$).
If $L$ is a nilpotent subalgebra of $\mathfrak{D}(A),$ then $L$ is finite dimensional over its field of constants and $s(L)\leq n.$ If $L$ is solvable and the ground field $\mathbb K$ is of characteristic zero, then $s(L)\leq 2n.$
\end{corollary}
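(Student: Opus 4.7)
The plan is to reduce each of the four situations to the general rank bound already obtained, by observing that in every case the ambient Lie algebra $\mathfrak{D}(A)$ sits inside $W(A) = R\,\Der A$ as a module of rank exactly $n$ over $R$.

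First I would fix $R$ to be the quotient field of $A$ in cases (1) and (2), and note that in cases (3) and (4) we simply have $R = A$. In all four cases the derivations $\partial/\partial x_1, \ldots, \partial/\partial x_n$ extend uniquely to $\mathbb K$-derivations of $R$ and are $R$-linearly independent in $\Der R$ (because $\partial x_j/\partial x_i = \delta_{ij}$). Since every element of $\mathfrak{D}(A)$ is by definition an $A$-linear combination of these partials, $\mathfrak{D}(A)$ is contained in $W(A)$ and
\[
\mathrm{rk}_R \mathfrak{D}(A) = \dim_R R\mathfrak{D}(A) = n.
\]
Consequently, for any subalgebra $L \subseteq \mathfrak{D}(A)$ we get $\mathrm{rk}_R L \le n$.

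Given this rank bound, the two assertions follow directly from the machinery of the previous sections. If $L$ is nilpotent, then Theorem \ref{nilpot} gives $\dim_F FL < \infty$, and since $L \subseteq FL$ with $s(L) = s(FL)$, the Lie algebra $L$ is itself finite dimensional over $F = F(L)$; Corollary \ref{derlennil} then yields $s(L) \le \mathrm{rk}_R L \le n$. If $L$ is solvable and $\mathrm{char}\,\mathbb K = 0$, Theorem \ref{derlensol} gives $s(L) \le 2\,\mathrm{rk}_R L \le 2n$.

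The only genuine content to verify is the identification $\mathrm{rk}_R \mathfrak{D}(A) = n$ in each of the four concrete rings; this is a standard fact but should be stated explicitly, since the definition of $\mathfrak{D}(A)$ in (3) and (4) requires one to check that the partials $\partial/\partial x_i$ with respect to the chosen transcendence basis really form an $R$-basis of $W(A)$. There is no further obstacle: everything else is a verbatim application of Theorems \ref{nilpot} and \ref{derlensol} and Corollary \ref{derlennil}.
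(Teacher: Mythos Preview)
Your proposal is correct and is exactly the argument the paper has in mind: the corollary is stated without proof in the paper because it is an immediate application of Theorem~\ref{nilpot}, Corollary~\ref{derlennil}, and Theorem~\ref{derlensol} once one observes that $\mathfrak{D}(A)\subseteq W(A)$ is spanned over $R$ by the $n$ partials, so every subalgebra has $R$-rank at most $n$. Your remark that only the inequality $\mathrm{rk}_R L\le n$ is needed (not that the partials form a full $R$-basis of $W(A)$) is well taken and slightly simplifies the verification in cases (3) and (4).
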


Let $\mathbb K=\mathbb C,$  $A=\mathbb C[[x_{1}, \ldots , x_{n}]]$ , and  $\overline{W}_n(\mathbb{K})=\Der A$ be the Lie algebra of all vector fields with formal power series coefficients.
\begin{corollary}\label{Martelo}
Let $L$ be a nilpotent (solvable) subalgebra of $\overline{W}_n(\mathbb{K}).$
Then the derived length of $L$ does not exceed $n$ ($2n$ respectively).
\end{corollary}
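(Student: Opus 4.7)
The plan is to observe that this corollary is essentially a specialization of Corollary \ref{maincor} (case (2)) to $\mathbb{K}=\mathbb{C}$, so the whole game is to check that the ranks of subalgebras of $\overline{W}_n(\mathbb{K})$ are bounded by $n$ and then invoke the main theorems already established.

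First I would verify that $A=\mathbb{C}[[x_1,\ldots,x_n]]$ is an integral domain with quotient field $R=\mathbb{C}((x_1,\ldots,x_n))$, so the general setup of the paper applies. Next I would identify $\overline{W}_n(\mathbb{K})=\Der A$ explicitly: every $\mathbb{C}$-derivation of $A$ is determined by its values on the generators $x_1,\ldots,x_n$, so $\Der A$ is a free $A$-module of rank $n$ with basis $\partial/\partial x_1,\ldots,\partial/\partial x_n$. Extending scalars to $R$, this gives $\dim_R R\Der A=n$, and hence $\text{rk}_R L\leq n$ for any $\mathbb{K}$-subalgebra $L\subseteq\overline{W}_n(\mathbb{K})$.

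Now the nilpotent case follows directly from Corollary \ref{derlennil} applied to $L$, which gives $s(L)\leq\text{rk}_R L\leq n$. For the solvable case, since $\text{char}\,\mathbb{K}=0$, Theorem \ref{derlensol} yields $s(L)\leq 2\,\text{rk}_R L\leq 2n$.

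There is no genuine obstacle here; the only subtlety worth flagging is that we apply the general theorems to $L\subseteq\Der A\subseteq R\Der A=W(A)$, which is legitimate because $\Der A$ embeds in $W(A)$ canonically and the earlier results are stated for subalgebras of $W(A)$. Everything else is bookkeeping.
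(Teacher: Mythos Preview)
Your proposal is correct and matches the paper's intended argument: the paper states Corollary~\ref{Martelo} immediately after Corollary~\ref{maincor} without a separate proof, precisely because it is the specialization of case~(2) of Corollary~\ref{maincor} to $\mathbb{K}=\mathbb{C}$, and your write-up supplies exactly the routine verification (that $\Der A$ is free of rank $n$ over $A$, hence ${\rm rk}_R L\leq n$, and then Corollary~\ref{derlennil} and Theorem~\ref{derlensol} apply) that the paper leaves implicit.
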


 The last  statement  was proved recently  in \cite{Mart1}, where it was used to study groups of automorhisms of formal power series rings.
As the next example shows, the  bound in Theorem \ref{derlensol} cannot be improved (see also  \cite{Mart1}).

\begin{example}
Let $L=\lbrace \sum_{i=1}^n a_i \frac{\partial}{\partial x_i}  \in \overline{W}_n ({\mathbb{K}}) \  | \
a_j \in \mathbb{K}[[x_1, \dots x_{j-1}]]+x_j \mathbb{K}[[x_1, \dots x_{j-1}]]\rbrace$.
Then the    derived length  of $L$ equals  $2n$.
\end{example}

If $L$ is a solvable subalgebra of rank $2$ over $R$ of the Lie algebra $W(A),$  then $L$ is contained in a maximal (by inclusion) solvable subalgebra of rank $2$ over $R.$  Really, let $S_{2}$ be the set of all solvable subalgebras of rank $2$ over $R$  from $W(A).$ Using Theorem \ref{derlensol} one can easily show that the set $S_{2}$ is inductively ordered (by inclusion), so there exists by Zorn's Lemma at least one maximal element  of $S_{2}.$  The next statement shows the possible types for such maximal solvable subalgebras of rank $2$ over $R.$ Since any solvable subalgebra $L$ of rank $2$ over $R$ from  $W(A)$ is contained in a maximal subalgebra of the same type we get a rough characterization  of such Lie algebras.
\begin{proposition} \label{solv2}
Let $L$ be a  solvable subalgebra of $W(A)$  which is  maximal (by inclusion) among all solvable subalgebras of rank $2$ over $R$ from $W(A)$  and let $F=F(L)$ be its  field of constants.  If the ground field $\mathbb K$  is of characteristic zero, then $L$ is a Lie algebra over $F, $  the algebra $L$ contains elements $D_{1}, D_{2}$ with $[D_{2}, D_{1}]=aD_{1}$ for some $a\in F_{1}=\ker D_{1}$ and $L$ is one of the following algebras over the field $F:$

1. $ L=\langle D_{2}\rangle \rightthreetimes F_{1}D_{1}.$

2. $ L=\langle D_{2}\rangle \rightthreetimes (F_{1}D_{1}+bF_{1}D_{1}),$  where $b\in R, D_{1}(b)=1, $ $ D_{2}(b)=ab+a_{1}$ for some $a_{1}\in F_{1}.$

3. $L=(\langle D_{2}\rangle \rightthreetimes \langle cD_{1}+dD_{2}\rangle ) \rightthreetimes F_{1}D_{1},$ where $c\in R, d\in F_{1}$ such that $D_{1}(c)\in F_{1}, D_{2}(d)=1, $ $ D_{2}(c)=-ac+r$ for some $r\in F_{1}.$

4. $L=(\langle D_{2}\rangle \rightthreetimes \langle cD_{1}+dD_{2}\rangle ) \rightthreetimes (F_{1}D_{1}+F_{1}bD_{1}),$ where $D_{1}(b)=1,$ $D_{2}(d)=1, d\in F_{1}, D_{1}(c)\in F_{1},$ $D_{2}(c)=ac+r, $   $D_{2}(b)=ab+a_{1}$ for some $r, a_{1} \in F_{1}.$

5. L is isomorphic to a solvable subalgebra of the affine Lie algebra $ga_{2}(F)$ containing $F^{2},$ in particular $2\leq \dim _{F}L\leq 5.$
\end{proposition}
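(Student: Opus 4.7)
The plan is to first use the maximality of $L$ together with the early lemma asserting that $FL$ is a solvable Lie algebra of the same rank as $L$ to conclude $L = FL$; this immediately gives the first assertion that $L$ is a Lie algebra over $F$. Thereafter I would split the analysis according to the maximal $R$-rank of an abelian ideal of $L$. Before splitting, I would produce the common element pair: choose a nonzero $D_1$ lying in a minimal (hence abelian) nonzero ideal and any $D_2 \in L$ completing it to an $R$-basis; then $[D_2,D_1] = aD_1$ for some $a \in R$ because $FD_1$ is invariant under $\ad D_2$. Specializing the ideal relation $[D_2,fD_1] = (D_2(f)+fa)D_1 \in L$ at $f = 1$ shows $a \in F_1 = \ker D_1$, giving the displayed commutator relation required in all five cases.

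In the first main case, where $L$ has an abelian ideal $J$ of rank $2$ over $R$, I would replace $J$ by $RJ \cap L$ (still an ideal by Lemma~\ref{intersectionideal} and still abelian since $L = FL$). Applying Proposition~\ref{nilfewdim} to the abelian Lie algebra $J$ (the degenerate case $k=0$ of its part 2), one has $FJ = FE_1 \oplus FE_2$. The adjoint action of $L$ preserves $FJ$, and the key claim is that it does so $F$-linearly. Indeed, for $D \in L$ and $\alpha \in F$, Lemma~\ref{properties}(1) gives $[D,\alpha E_i] = D(\alpha)E_i + \alpha[D,E_i]$, and reading off the $E_i$-coordinates of this element (which already lies in $FE_1 + FE_2$) forces $D(\alpha) \in F$. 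Thus $L/J$ embeds into $gl_2(F)$, and $L$ itself embeds into $ga_2(F)$ as a solvable subalgebra containing $F^2 = FJ$, yielding case 5 with $2 \le \dim_F L \le 5$.

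In the complementary case, where every abelian ideal has $R$-rank $\le 1$, I would take the minimal ideal produced above and, using maximality of $L$ together with the commutator formula $[D_2,fD_1] = (D_2(f)+fa)D_1$, enlarge it to the ideal $I = F_1 D_1$. By Lemma~\ref{quotient}, $\dim_F FL/FI \le 2$, so the four remaining types emerge from two independent binary choices. The first is whether $RI \cap L$ strictly contains $I$: if so, there exists $b \in R$ with $bD_1 \in L$ and, after rescaling, $D_1(b) = 1$; then $[D_2,bD_1] \in RI\cap L$ forces $D_2(b) = ab + a_1$ with $a_1 \in F_1$. This separates types 2, 4 from types 1, 3. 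The second is whether $L/(RI\cap L)$ is abelian or the unique $2$-dimensional nonabelian rank-$1$ algebra of Lemma~\ref{solfewdim}(2); in the nonabelian case an additional generator $cD_1 + dD_2$ appears with $d \in F_1$ and $D_2(d) = 1$, and the relations on $c$ stated in the proposition follow from the ideal conditions $[D_1,cD_1+dD_2] \in I$ and $[D_2, cD_1+dD_2] \in RI\cap L$. This separates types 3, 4 from 1, 2, and combining the two dichotomies gives cases 1--4.

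The main obstacle I anticipate is in case 5, namely establishing $F$-linearity of the adjoint action on $FJ$: a priori the coefficients $D(\alpha)$ produced in $[D,\alpha E_i]$ live in $R$, not $F$, and one must use both the reduction $L = FL$ and the equality $J = RJ \cap L$ to conclude $D(\alpha) \in F$. A secondary technical issue in the second case is checking that the tidy normalizations $D_1(b) = 1$ and $D_2(d) = 1$ can be achieved by $F$-rescaling within $L$, rather than requiring an enlargement of $L$ that maximality would forbid; the sign asymmetry between $D_2(c) = -ac + r$ in type 3 and $D_2(c) = ac + r$ in type 4 will also need to be carefully traced through the ideal relations.
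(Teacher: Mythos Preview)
Your outline for the rank-$1$ branch (cases 1--4) essentially matches the paper: you split according to whether $I_1 = RD_1\cap L$ is abelian or not, and whether $\dim_F L/I_1$ is $1$ or $2$, which is exactly the paper's Subcases 1 and 2. A small technical slip: Lemma~\ref{quotient} applies to $I_1 = RD_1\cap L$, not to your $I = F_1D_1$; when $I_1$ is nonabelian these differ, so you should quote the lemma for $I_1$ and then combine with $\dim_{F_1} I_1/I \le 1$.

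The genuine gap is in case~5. First, two missteps: since ${\rm rk}_R J = 2 = {\rm rk}_R L$ you have $RJ = RL$, so ``replacing $J$ by $RJ\cap L$'' returns all of $L$, which is certainly not abelian; just work with $J$ itself. Second, your anticipated ``main obstacle'' evaporates: for $\alpha\in F = F(L)$ one has $D(\alpha)=0$ by the very definition of the field of constants, so $F$-linearity of the adjoint action on $FJ$ is automatic. The real difficulty, which you skip, is the passage from ``$L/J$ embeds into $gl_2(F)$'' to ``$L$ embeds into $ga_2(F)$''. Knowing only that $0\to F^2\to L\to M\to 0$ is an extension with $M\subseteq gl_2(F)$ solvable does not by itself force $L\hookrightarrow gl_2(F)\ltimes F^2$; you would need a splitting or a cohomology-vanishing argument, and none is given. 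The paper avoids this abstract question entirely: it writes an arbitrary $D_3\in L\setminus J$ as $u_1D_1+u_2D_2$, shows $D_i(u_j)\in F$, and then (after a basis change in $J$ when the matrix $(D_i(u_j))$ is degenerate) produces elements $v_1,v_2\in R$ with $D_i(v_j)=\delta_{ij}$, exhibiting $L$ concretely inside the copy $F\langle D_1,D_2,v_iD_j\rangle\cong ga_2(F)$ of $W(A)$. This explicit construction using the ambient ring $R$ is the missing ingredient in your proposal.
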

\begin{proof}
Let $L$ be a subalgebra of the Lie algebra $W(A)$ satisfying all the conditions of  this Proposition. Then $FL$ as a Lie algebra over the field $\mathbb K$ also satisfies these conditions and $L\subseteq FL.$  Therefore $FL=L$ because of maximality of  $L$  and $L$ is a Lie algebra over the field $F.$ We consider two cases dependent on  properties of  maximal abelian ideals of $L:$

{\underline{Case 1.}} Every   maximal abelian ideal of $L$ is of rank $1$ over $R.$ Take any two such ideals $I$ and $J$ of $L$ and let $D_{1}\in I, D_{2}\in J$ be nonzero elements. If $D_{1}$ and $D_{2}$ are linearly independent over $R,$ then
$I\cap J=0$ and $I+J$ is an abelian ideal of rank $2$ over $R$ from $L.$ But then $I+J$ is contained in a maximal abelian ideal of rank $2$ over $R$ from $L$ which contradicts to our assumption. Therefore $D_{1}$ and $D_{2}$ are linearly dependent over $R$ and $I+J$ is of rank $1$ over $R.$ Since $I+J$ is a nilpotent ideal of $L$ it follows from Proposition \ref{nilfewdim} that $I+J$ is abelian. But then $I=J$ and $I$ is the only maximal abelian ideal of  rank $1$ from $L.$ Denote $I_{1}=RI\cap L.$ The ideal $I_{1}$ has  rank $1$ over $R$  and $\dim _{F}L/I_{1}\leq 2$ by Lemma \ref{quotient}. Take any nonzero element $D_{1}$ from $I_{1}$ provided that $I_{1}$ is abelian, or from the abelian ideal $[I_{1}, I_{1}]$ in other case (recall that $I_{1}$ has derived length at most $2$). It can be easily shown  that $[D_{2}, D_{1}]=aD_{1}$ for some element $a\in F_{1}=\ker D_{1}$  and $F_{1}I_{1}$ is a subalgebra of $W(A)$ of rank $1$ over $R.$  It is easy to prove  that $[D_{2}, F_{1}I_{1}]\subseteq F_{1}I_{1}$ and therefore $L+F_{1}I_{1}$ is a solvable subalgebra of rank $2$ from $W(A).$
But then $L=L+F_{1}I_{1}$ because of maximality of $L$ and hence $F_{1}I_{1}\subseteq L.$ The latter means that $F_{1}I_{1}=I_{1}$ and $I_{1}$ is a Lie algebra over the field $F_{1}.$

{\underline{Subcase 1.}} The ideal $I_{1}$ is abelian.  If $\dim _{F}L/I_{1}=1,$ then choosing any element $D_{2}\in L\setminus I_{1}$ we see that  $L=\langle D_{2}\rangle \rightthreetimes F_{1}D_{1}$ is a Lie algebra of type 1. Let $\dim _{F}L/I_{1}=2.$  Then $L/I_{1}$ is nonabelian by Lemma \ref{quotient}. Take the one-dimensional ideal $\langle D_{2}+I_{1}/I_{1}$ from the quotient algebra $L/I_{1}.$ Take also any element $cD_{1}+dD_{2} \in L$ such that $[D_{2}, cD_{1}+dD_{2}]=D_{2}+rD_{1}$ for some element $rD_{1}\in I_{1}.$
This gives the equality $D_{2}(c)D_{1}+caD_{1}+D_{2}(d)D_{2}=D_{2}+rD_{1}$ which implies $D_{2}(d)=1,$ and $D_{2}(c)=-ac+r.$ Besides, from the inclusion
 $[D_{1}, cD_{1}+dD_{2}]\in I_{1}$ we get that $D_{1}(d)=0,$ i.e. $d\in F_{1}.$ The same relation also gives $D_{1}(c)\in F_{1}.$ We see that $L$ is a Lie algebra of type 3 of Proposition.

{\underline{Subcase 2.}} The ideal $I_{1}$ is nonabelian. Suppose  first that $\dim _{F}L/I_{1}=1$  and take any element $D_{2}\in L\setminus I_{1}.$ In view of Lemma \ref{solfewdim}, $I_{1}=F_{1}D_{1}+F_{1}bD_{1}$ for some $b\in R$ such that $D_{1}(b)=1.$ Since $[D_{2}, D_{1}]=aD_{1}$ for some $a\in F_{1},$ it holds $[D_{1}, D_{2}](b)=aD_{1}(b)=a.$  On the other hand  $(D_{1}D_{2}-D_{2}D_{1})(b)=D_{1}(D_{2}(b))=a.$ But then $D_{1}(ba-D_{2}(b))=a-a=0$ and hence $ba-D_{2}(b)\in F_{1}.$ Then $D_{2}(b)=ba+a_{1}$ for some element $a_{1}\in F_{1}$  and $L$ is a Lie algebra of type 2.
Let now $\dim _{F}L/I_{1}=2.$ The quotient algebra ${F}L/FI_{1}$ is nonabelian by Lemma \ref{quotient}. Take the one-dimensional ideal $\langle D_{2}+I_{1}\rangle$ from the quotient algebra $L/I_{1}$ (over $F$) and let $cD_{1}+dD_{2}$ be such an element that $[D_{2}, cD_{1}+dD_{2}]=D_{2}+rD_{1}$ for some element $rD_{1}\in I_{1}.$  It follows from this relation that  $D_{2}(d)=1$ and $D_{2}(d)=-ac+r$ for $r\in F_{1}.$ Further we have from the inclusion $[D_{1}, cD_{1}+dD_{2}]\in I_{1}$ that $D_{1}(d)=0.$ This means that $d\in F_{1}.$ Using the same  inclusion we get  $D_{1}(c)\in F_{1}.$ Further, as above one can show  that $D_{2}(b)=ab+a_{1}$ for some element $a_{1}\in F_{1}.$ and $L$ is a Lie algebra of type 4.

{\underline{Case 2.}} $L$ contains at least one  maximal abelian ideal  of rank $2$ over $R.$ Denote it by $J$ and
choose any two elements $D_{1}$ and $D_{2}$ from $J$ linearly independent over $R.$
If $D=u_{1}D_{1}+u_{2}D_{2}\in J,$ then from the equality $$0=[D_{i}, D]=[D_{i}, u_{1}D_{1}+u_{2}D_{2}]=D_{i}(u_{1})D_{1}+D_{i}(u_{2})D_{2}, i=1, 2$$
we obtain $D_{i}(u_{j})=0.$ The latter means that $u_{i}\in F$ i.e. $\dim _{F}J=2.$
Since $J$ is a maximal abelian ideal of $L$ it holds $C_{L}(J)=J.$ Therefore $\dim _{F}L/J\leq 3$ because of solvability of $L/J$ and equality $\dim J=2.$ Let us consider the case $\dim L/J=1$ and take any element $D_{3}\in L\setminus J.$ Then $D_{3}=u_{1}D_{1}+u_{2}D_{2}$ for some $u_1 , u_2 \in R.$  As
$$[D_i , D_3]=D_i(u_1)D_1+D_i(u_2)D_2\in J$$ we obtain $D_i(u_j)\in F, \  i, j=1,2.$ If the matrix
\begin{equation}\label{matrix}
\left(  \begin{array}{cc}  D_1(u_1)  & D_2(u_1) \\         D_1(u_2) & D_2(u_2) \\        \end{array}
   \right)
\end{equation}
   is nonsingular, then applying an appropriate linear transformation we can write
   $$u_1=\alpha _{11}v_1+\alpha _{12}v_2,   \   u_2= \alpha _{21}v_1+\alpha _{22}v_2  $$
   for some $\alpha _{ij} \in F$ and $D_{i}(v_{j})=\delta _{ij}, $ where $\delta _{ij}$ is the Kronecker symbol.
   It is obvious that $L_{1}=F\langle D_{1}, D_{2}, v_{i}D_{j} \ | \ i, j=1,2\rangle$ is a Lie algebra of dimension $6$ over $F$ isomorphic to the general affine Lie algebra  $ga_{2}(F).$  But then
   $$D_{3}=u_1D_1+u_2D_2=(\alpha_{11}v_1+\alpha _{12}v_2)D_1+(\alpha_{21}v_1+\alpha _{22}v_2)D_2$$ is an element of $L_{1}$ and $L$ is a subalgebra of $L_1.$

   Let now the matrix (\ref{matrix})  be degenerated. Since $D_{3}\in L\setminus J,$ at least one of the rows of the matrix (\ref{matrix}) is nonzero, let the first. Without loss of generality we can assume that $D_{1}(u_{1})=1, \ D_{2}(u_{1})=\gamma $ for some $\gamma \in F.$  The second row of the matrix (\ref{matrix}) is proportional to the first one and therefore $u_2=\alpha u_{1}+\beta $ for some $\alpha , \beta \in F.$ Then we have $D_{3}=u_{1}D_{1}+(\alpha u_{1}+\beta )D_{2}.$  Replacing the element $D_{3}$ by  the element $D_{3}-\beta D_2$ we can assume that $D_{3}=u_{1}D_{1}+\alpha u_{1}D_{2}.$ If $\gamma =0,$ then $D_{1}(u_{1})=1, D_{2}(u_{1})=0$ and $L$ is isomorphic to a subalgebra of $ga_{2}(F).$  In case $\gamma \not=0$ we choose the basis $D_{1}'=D_{1}, D_{2}'=D_{1}-\gamma ^{-1}D_{2}$ of the abelian ideal $J.$ Then we obtain $D_{1}'(u_{1})=1, D_{2}'(u_{1})=0$ and all is done. Analogously one can consider the cases $\dim L/J=2$ and $\dim L/J=3$ and show that $L$ is isomorphic to a subalgebra of $ga_{2}(F).$
\end{proof}


%
\end{document}